\newtheorem{theo}{Theorem}[section]
\newtheorem{defi}{Definition}[subsection]
\newtheorem{lem}{Lemma}[subsection]
\newtheorem{prop}{Proposition}[subsection]
\newtheorem{Rem}{Remark}[section]
\newtheorem{Cor}{Corollary}[section]
\title{\textbf{Asymptotic distribution of least squares estimators for linear models with dependent errors : regular designs}}    
\author{Emmanuel Caron \footnote{Emmanuel Caron, Ecole Centrale Nantes, Laboratoire de Mathématiques Jean Leray UMR 6629, 1 Rue de la Noë, 44300 Nantes. Email: emmanuel.caron@ec-nantes.fr}, Sophie Dede \footnote{Sophie Dede, Lycée Stanislas, 22 Rue Notre-Dame-des-Champs, 75006 Paris. Email: dede.sophie@gmail.com}}     
\begin{document}

\maketitle

\pagestyle{plain}

\begin{abstract}
In this paper, we consider the usual linear regression model in the case where the error process is assumed strictly stationary. We use a result from Hannan, who proved a Central Limit Theorem for the usual least squares estimator under general conditions on the design and the error process. We show that for a large class of designs, the asymptotic covariance matrix is as simple as the independent and identically distributed case. We then estimate the covariance matrix using an estimator of the spectral density whose consistency is proved under very mild conditions.
\end{abstract}

\section{Introduction}

We consider the usual fixed-design linear regression model: 
\[Y = X\beta + \epsilon,\]
where $X$ is the fixed design matrix and $(\epsilon_{i})_{i \in \mathbb{Z}}$ is a stationary process. This model is commonly used in time series regression.

Our work is based on the paper by Hannan \cite{hannan73clt}, who proved a Central Limit Theorem for the usual least squares estimator under general conditions on the design and the error process.
Most of short-range dependent processes satisfy the conditions on the error process, for instance the class of linear processes with summable coefficients and square integrable innovations, a large class of functions of linear processes, and many processes under various mixing conditions (see for instance \cite{dmv2007weak}, and also \cite{dedecker2015optimality} for the optimality of Hannan’s condition).

In this paper, it is shown that for a large class of designs satisfying the conditions of Hannan, the covariance matrix of the limiting distribution of the least squares estimator is the same as in the i.i.d.\footnote{independent and identically distributed.} case, up to the usual error variance term, which should be replaced by the covariance series of the error process.
We shall refer to this very large class of designs as « regular designs » (see Section $2.3$ for the precise definition). 
It includes many interesting examples, for instance the ANOVA type designs or the designs whose columns are regularly varying (such as the polynomial regression type designs).

For this class of regular designs, any consistent estimator of the covariance series of $(\epsilon_{i})_{i \in \mathbb{Z}}$ may be used to obtain a Gaussian limit distribution with explicit covariance matrix for the normalized least squares estimator. Doing so, it is then possible to obtain confidence regions and test procedures for the unknown parameter $\beta$. In this paper, assuming only that Hannan's condition on $(\epsilon_{i})$ is satisfied, we propose a consistent estimator of the spectral density of $(\epsilon_{i})$ (as a byproduct, we get an estimator of the covariance series).

Wu and Liu \cite{wuspectraldensity} considered the problem of estimating the spectral density for a large class of short-range dependent processes. They proposed a consistent estimator for the spectral density, and gave some conditions under which the centered estimator satisfies a Central Limit Theorem.
These results are based on the asymptotic theory of stationary processes developed by Wu \cite{wudependence}. 
This framework enables to deal with most of the statistical procedures from time series, including the estimation of the spectral density. However the class of processes satisfying the  $\mathbb{L}^{2}$ "physical dependence measure" introduced by Wu is included in the class of processes satisfying Hannan’s condition.
In this paper, we prove the consistency of an estimator of the spectral density of the error process under Hannan’s condition.
Compared to Wu’s precise results on the estimation of the spectral density (Central Limit Theorem, rates of convergence, deviation inequalities), our result is only a consistency result, but it holds under Hannan’s condition, that is for most of short-range dependent processes.

The paper is organized as follows. In Section $2$, we recall Hannan’s Central Limit Theorem for the least squares estimator, and we define the class of « regular designs » (we also give many examples of such designs).
In Section $3$, we focus on the estimation of the spectral density of the error process under Hannan's condition. 
Finally, some examples of stationary processes satisfying Hannan's condition are presented in Section $4$. 

\section{Hannan's theorem and regular design}

\subsection{Notations and definitions}

Let us recall the equation of the linear regression model: 
\begin{equation}
Y = X\beta + \epsilon,
\label{-1}
\end{equation}
where $X$ is a deterministic design matrix and $\epsilon$ is an error process defined on a probability space ($\Omega, \mathcal{F}, \mathbb{P}$). Let $X_{.,j}$ be the column $j$ of the matrix $X$, and $x_{i,j}$ the real number at the row $i$ and the column $j$, where $j$ is in $\{1, \ldots, p\}$ and $i$ in $\{1, \ldots, n\}$. The random vectors $Y$ and $\epsilon$ belong to $\mathbb{R}^{n}$ and $\beta$ is a $p \times 1$ vector of unknown parameters.

Let $\left \| . \right \|_{2}$ be the usual euclidean norm on $\mathbb{R}^{n}$, and $\left \| . \right \|_{\mathbb{L}^{p}}$ be the $\mathbb{L}^{p}$-norm on $\Omega$, defined for all random variable $Z$ by: $\left \| Z \right \|_{\mathbb{L}^{p}} = \left[ \mathbb{E} \left( |Z|^{p} \right) \right]^{\frac{1}{p}}$ . We say that $Z$ is in $\mathbb{L}^{p}(\Omega)$ if $\left[ \mathbb{E} \left( |Z|^{p} \right) \right]^{\frac{1}{p}} < \infty$.

The error process $(\epsilon_{i})_{i \in \mathbb{Z}}$ is assumed to be strictly stationary with zero mean. Moreover, for all $i$ in $\mathbb{Z}$, $\epsilon_{i}$ is supposed to be in $\mathbb{L}^{2}(\Omega)$. More precisely, the error process satisfies, for all $i$ in $\mathbb{Z}$: 
\[\epsilon_{i} = \epsilon_{0} \circ \mathbb{T}^{i},\]
where $\mathbb{T}: \Omega \rightarrow \Omega$ is a bijective bimeasurable transformation  preserving the probability measure $\mathbb{P}$. Note that any strictly stationary process can be represented in this way.

Let ($\mathcal{F}_{i}$)$_{i \in \mathbb{Z}}$ be a non-decreasing filtration built as follows, for all $i$: 
\[\mathcal{F}_{i} = \mathbb{T}^{-i}(\mathcal{F}_{0}),\]
where $\mathcal{F}_{0}$ is a sub-$\sigma$-algebra of $\mathcal{F}$ such that $\mathcal{F}_{0} \subseteq \mathbb{T}^{-1}(\mathcal{F}_{0})$. For instance, one can choose the past $\sigma$-algebra before time $0$: $\mathcal{F}_{0} = \sigma(\epsilon_{k}, k \leq 0)$, and then $\mathcal{F}_{i} = \sigma(\epsilon_{k}, k \leq i)$. In that case, $\epsilon_{0}$ is $\mathcal{F}_{0}$-measurable.

As in Hannan, we shall always suppose that $\mathcal{F}_{-\infty} = \underset{i \in \mathbb{Z}}{\bigcap} \mathcal{F}_{i}$ is trivial. Moreover $\epsilon_{0}$ is assumed $\mathcal{F}_{\infty}$-measurable. These assumptions imply that the $\epsilon_{i}$'s are all regular random variables in the following sense:

\begin{defi}[Regular random variable]
Let $Z$ be a random variable in $L^{1}(\Omega)$. We say that $Z$ is regular with respect to the filtration $(\mathcal{F}_{i})_{i \in \mathbb{Z}}$ if $\mathbb{E}(Z | \mathcal{F}_{-\infty}) = \mathbb{E}(Z)$ almost surely and if $Z$ is $\mathcal{F}_{\infty}$-measurable. 
\end{defi}
This implies that there exists a spectral density $f$ for the error process, defined on $[-\pi, \pi]$. The autocovariance function $\gamma$ of the process $\epsilon$ then satisfies: 
\[\gamma(k) = \mathrm{Cov} (\epsilon_{m}, \epsilon_{m+k}) = \mathbb{E}(\epsilon_{m}\epsilon_{m+k}) = \int_{-\pi}^{\pi} e^{i k \lambda} f(\lambda) d\lambda.\]

\subsection{Hannan's Central Limit Theorem}

Let $\hat{\beta}$ be the usual least squares estimator for the unknown vector $\beta$.
Hannan \cite{hannan73clt} has shown a Central Limit Theorem for $\hat{\beta}$ when the error process is stationary. In this section, the conditions for applying this theorem are recalled.

Let $(P_{j})_{j \in \mathbb{Z}}$ be a family of projection operators, defined for all $j$ in $\mathbb{Z}$ and for any $Z$ in $\mathbb{L}^{2}(\Omega)$ by:
\[P_{j}(Z) = \mathbb{E}(Z | \mathcal{F}_{j}) - \mathbb{E}(Z | \mathcal{F}_{j-1}).\]
We shall always assume that Hannan's condition on the error process is satisfied:
\begin{equation}
\sum_{i \in \mathbb{Z}} \left \| P_{0}(\epsilon_{i}) \right \|_{\mathbb{L}^{2}} < +\infty.
\tag{C1}
\label{0}
\end{equation}
Note that this condition implies that:
\begin{equation}
\sum_{k \in \mathbb{Z}} \left| \gamma(k) \right| < \infty,
\label{0bis}
\end{equation}
(see for instance \cite{dmv2007weak}).

Hannan's condition provides a very general framework for stationary processes. The hypothesis~\eqref{0} is a sharp condition to have a Central Limit Theorem for the partial sum sequence (see the paper of Dedecker, Merlevède and Voln\'y \cite{dmv2007weak} for more details). Notice that the condition~\eqref{0bis} implies that the error process is short-range dependent. 
However, Hannan's condition is satisfied for most short-range dependent stationary processes.  In particular, it is less restrictive that the well-known condition of Gordin \cite{gordin1969central}. Moreover the property of $2$-strong stability introduced by Wu \cite{wu2005nonlinear} is more restrictive than Hannan's condition. This property of $2$-strong stability will be recalled in Section $4$, where large classes of examples will be fully described.

Let us now recall Hannan’s assumptions on the design. Let us introduce:
\begin{equation}
d_{j}(n) = \left \| X_{.,j} \right \|_{2} = \sqrt{\sum_{i=1}^{n} x_{i, j}^{2}},
\end{equation}
and let $D(n)$ be the diagonal matrix with diagonal term $d_{j}(n)$ for $j$ in $\{1, \ldots, p\}$.

Following Hannan, we also require that the columns of the design $X$ satisfy the following conditions:

\begin{equation}
\forall j \in \{1, \ldots,p\}, \qquad \lim_{n \rightarrow \infty} d_{j}(n) = \infty,
\tag{C2}
\label{1}
\end{equation}
and:
\begin{equation}
\forall j \in \{1, \ldots, p\}, \qquad \lim_{n \rightarrow \infty} \frac{\sup_{1 \leq i \leq n} \left | x_{i,j} \right |}{d_{j}(n)} = 0.
\tag{C3}
\label{2}
\end{equation}
Moreover, we assume that the following limits exist: 

\begin{equation}
\forall j, l \in \{1, \ldots, p\}, \qquad \rho_{j,l}(k) = \lim_{n \rightarrow \infty} \sum_{m=1}^{n-k} \frac{x_{m, j} x_{m+k,l}}{d_{j}(n)d_{l}(n)}.
\tag{C4}
\label{3}
\end{equation}

Notice that there is a misprint in Hannan’s paper (the supremum is missing on
condition~\eqref{2}). Note that Conditions~\eqref{1} and~\eqref{2} correspond to the usual Lindeberg’s conditions for linear statistics in the i.i.d. case. In the dependent case, we also need Condition~\eqref{3}.

The $p \times p$ matrix formed by the coefficients $\rho_{j,l}(k)$ is called $R(k)$:
\begin{equation}
R(k) = [\rho_{j,l}(k)] = \int_{-\pi}^{\pi} e^{i k \lambda} F_{X}(d\lambda),
\label{4}
\end{equation}
where $F_{X}$ is the spectral measure associated with the matrix $R(k)$. The matrix $R(0)$ is supposed to be positive definite: 
\begin{equation}
R(0) > 0.
\tag{C5}
\label{4bis}
\end{equation}

Let then $F$ and $G$ be the matrices:
\begin{equation}
F = \frac{1}{2\pi} \int_{-\pi}^{\pi} F_{X}(d\lambda),
\label{5}
\end{equation}
\begin{equation}
G = \frac{1}{2\pi} \int_{-\pi}^{\pi} F_{X}(d\lambda) \otimes f(\lambda).
\label{6}
\end{equation}

The Central Limit Theorem for the regression parameter, due to Hannan \cite{hannan73clt}, can be stated as follows:

\begin{theo}
Let $(\epsilon_{i})_{i \in \mathbb{Z}}$ be a stationary process with zero mean. Assume that $\mathcal{F}_{-\infty}$ is trivial, $\epsilon_{0}$ is $\mathcal{F}_{\infty}$-measurable, and that the sequence $(\epsilon_{i})_{i \in \mathbb{Z}}$ satisfies Hannan's condition~\eqref{0}. Assume that the design $X$ satisfies the conditions~\eqref{1},~\eqref{2},~\eqref{3} and~\eqref{4bis}. 
Then: 
\begin{equation}
D(n)(\hat{\beta} - \beta) \xrightarrow[n \rightarrow \infty]{\mathcal{L}} \mathcal{N}(0, F^{-1}GF^{-1}).
\label{7}
\end{equation}
Furthermore, there is the convergence of second order moment: \footnote{The transpose of a matrix $X$ is denoted by $X^{t}$.}
\begin{equation}
\mathbb{E} \left( D(n) (\hat{\beta} -\beta) (\hat{\beta} -\beta)^{t} D(n)^{t} \right) \xrightarrow[n \rightarrow \infty]{} F^{-1}GF^{-1}.
\label{9}
\end{equation}
\label{8}
\end{theo}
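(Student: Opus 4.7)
The plan is to exploit the explicit form of the least square estimator, $\hat\beta - \beta = (X^tX)^{-1} X^t\epsilon$, which yields
\[
D(n)(\hat\beta - \beta) = \bigl[D(n)^{-1} X^t X\, D(n)^{-1}\bigr]^{-1} D(n)^{-1} X^t\epsilon.
\]
The strategy then splits into three parts: (i) show that the deterministic matrix $M_n := D(n)^{-1} X^t X D(n)^{-1}$ converges to $R(0)$ (i.e.\ to $F$ up to the $2\pi$ factor implicit in the definitions \eqref{4} and \eqref{5}), (ii) establish a multivariate CLT for the random vector $W_n := D(n)^{-1} X^t\epsilon$ towards a centered Gaussian with covariance proportional to $G$, and (iii) combine via Slutsky's lemma. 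The second-order moment statement \eqref{9} is obtained by a separate direct computation of $\mathbb{E}[W_n W_n^t]$ together with uniform integrability.

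For part (i), the $(j,l)$-entry of $M_n$ equals $\sum_m x_{m,j} x_{m,l}/(d_j(n)d_l(n))$, which by \eqref{3} with $k=0$ converges to $\rho_{j,l}(0)$. The limit matrix is invertible thanks to assumption \eqref{4bis}, so $M_n^{-1}$ converges as well. For part (ii), I would use the Cram\'er--Wold device: fix $a \in \mathbb{R}^p$, put $b_{i,n} = \sum_j a_j x_{i,j}/d_j(n)$, and study the weighted sum $a^t W_n = \sum_{i=1}^n b_{i,n}\epsilon_i$. Using that $\mathcal{F}_{-\infty}$ is trivial and that $\epsilon_0$ is $\mathcal{F}_\infty$-measurable, one has the orthogonal expansion $\epsilon_i = \sum_{k\in\mathbb{Z}} P_k(\epsilon_i)$ in $\mathbb{L}^2$, whence
\[
\sum_{i=1}^n b_{i,n}\,\epsilon_i \;=\; \sum_{k\in\mathbb{Z}} \sum_{i=1}^n b_{i,n}\, P_k(\epsilon_i).
\]
By stationarity $P_k(\epsilon_i) = P_0(\epsilon_{i-k})\circ \mathbb{T}^k$, so for each $k$ the inner sum is a weighted sum of stationary martingale differences. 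Truncating the outer sum to $|k|\leq N$, I would apply the martingale CLT to each truncation; the Lindeberg condition follows from the design assumption \eqref{2} (which ensures that no weight $b_{i,n}$ dominates). Condition \eqref{0} then provides a uniform $\mathbb{L}^2$ Cauchy control over the tail $|k|>N$, allowing one to let $N \to \infty$. Finally, the limiting variance is identified as
\[
\lim_n \sum_{k\in\mathbb{Z}} \gamma(k) \sum_{i} b_{i,n}\, b_{i+k,n} \;=\; a^t G\, a,
\]
by dominated convergence in $k$ (legitimate because $\sum_k|\gamma(k)|<\infty$ by \eqref{0bis}) combined with the entrywise convergence \eqref{3}.

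For part (iii), Slutsky's lemma applied to $M_n^{-1}$ and $W_n$ yields \eqref{7}. To prove \eqref{9}, I would compute directly
\[
\mathbb{E}\bigl[W_n W_n^t\bigr]_{j,l} \;=\; \sum_{k\in\mathbb{Z}} \gamma(k)\, \sum_{m}\frac{x_{m,j}\,x_{m+k,l}}{d_j(n)\,d_l(n)},
\]
and interchange limit and sum via dominated convergence using \eqref{3} and \eqref{0bis}. Multiplying on either side by $M_n^{-1}$ gives the stated second-moment limit.

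The main obstacle is part (ii): controlling the martingale approximation uniformly in $n$. The delicate point is showing that $\bigl\| \sum_i b_{i,n} P_k(\epsilon_i) \bigr\|_{\mathbb{L}^2}$ is summable in $k$ uniformly in $n$, which is exactly what Hannan's condition \eqref{0} is designed to provide once one notices that the weights $b_{i,n}$ are bounded in an appropriate $\ell^2$-sense. Everything else---the verification of Lindeberg through \eqref{2}, the convergence of the variance through \eqref{3}, and the final matrix algebra---is essentially bookkeeping.
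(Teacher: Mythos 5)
First, a point of comparison: the paper does not prove Theorem~\ref{8} at all --- it is stated and attributed to Hannan \cite{hannan73clt}, and the Proofs section only covers Proposition~\ref{9ajout} and Theorem~\ref{50}. So your attempt has to be measured against Hannan's original argument, which it essentially reconstructs: the reduction $D(n)(\hat{\beta}-\beta)=M_{n}^{-1}W_{n}$ with $M_{n}=D(n)^{-1}X^{t}XD(n)^{-1}$, the Cram\'er--Wold device, the martingale approximation through the projections $P_{k}$, Hannan's condition \eqref{0} to control the truncation tail, and Slutsky. Parts (i) and (iii) are fine, and your treatment of \eqref{9} is actually simpler than you suggest: since the design is fixed, $M_{n}$ is deterministic, so $\mathbb{E}\left( D(n)(\hat{\beta}-\beta)(\hat{\beta}-\beta)^{t}D(n)^{t}\right) = M_{n}^{-1}\,\mathbb{E}\left( W_{n}W_{n}^{t}\right) M_{n}^{-1}$ exactly, and no uniform integrability argument is needed --- your dominated-convergence computation of $\mathbb{E}\left( W_{n}W_{n}^{t}\right)$ (dominated because $\left| \sum_{m} x_{m,j}x_{m+k,l}\right| \leq d_{j}(n)d_{l}(n)$ by Cauchy--Schwarz, together with \eqref{0bis}) already closes it.

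The genuine gap is in part (ii), at the martingale CLT for the truncated array. Writing $u_{k,n}=\sum_{i} b_{i,n}P_{k}(\epsilon_{i})$ (a martingale difference sequence in $k$), you verify only the Lindeberg condition, via \eqref{2}. But a martingale CLT also requires the sum of conditional variances $\sum_{k}\mathbb{E}\left( u_{k,n}^{2}\,|\,\mathcal{F}_{k-1}\right)$ to converge in probability to the deterministic limiting variance, and for weighted sums of stationary martingale differences with arbitrary (non-Ces\`aro) weights $b_{i,n}$ this is the heart of the matter: it is \emph{not} implied by \eqref{2}. A max-negligible weight array can concentrate on sparse blocks along which the time averages of the stationary sequence $\mathbb{E}\left( u_{0}^{2}\,|\,\mathcal{F}_{-1}\right)\circ\mathbb{T}^{k}$ do not stabilize (there is no ergodic theorem along arbitrary subsequences), in which case the weighted sum converges to a variance mixture of Gaussians rather than to a Gaussian. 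Closing this step requires real use of the hypothesis that $\mathcal{F}_{-\infty}$ is trivial --- it makes the centered conditional variance a regular random variable, whose autocovariances vanish at infinity, which combined with the negligibility \eqref{2} forces the conditional variances to stabilize --- together with \eqref{3} for every $k$ (equivalently, the existence of the spectral measure $F_{X}$), which is where Hannan's frequency-domain computations do their work; your sketch invokes triviality of $\mathcal{F}_{-\infty}$ only for the orthogonal expansion and uses \eqref{3} only to identify the variance, so this step is missing, and it is precisely not ``bookkeeping.'' Two further small slips: the limiting variance of $a^{t}W_{n}$ is $\sum_{k}\gamma(k)\,a^{t}R(k)a = (2\pi)^{2}\,a^{t}Ga$, not $a^{t}Ga$ (the factor cancels after conjugating by $M_{n}^{-1}\to R(0)=2\pi F$, but as written the constant is wrong); and your claimed summability in $k$ of $\left\| \sum_{i} b_{i,n}P_{k}(\epsilon_{i})\right\|_{\mathbb{L}^{2}}$ uniformly in $n$ is stronger than what \eqref{0} gives directly --- the correct statement exploits orthogonality of the slots $k$ plus a Cauchy--Schwarz-with-weights bound, yielding square-summability controlled by the tails of $\sum_{r}\left\| P_{0}(\epsilon_{r})\right\|_{\mathbb{L}^{2}}$, which is what the truncation argument actually needs.
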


\subsection{Regular design}

Theorem~\ref{8} is very general because it includes a very large class of designs. In this paper, we will focus on the case where the design is regular in the following sense:

\begin{defi}[Regular design]
A fixed design $X$ is called regular if, for any $j, l$ in $\{1, \ldots, p\}$, the coefficients $\rho_{j,l}(k)$ do not depend on $k$.
\end{defi}

A large class of regular designs is the one for which the columns are regularly varying sequences. Let us recall the definition of regularly varying sequences:

\begin{defi}[Regularly varying sequence \cite{seneta2006regularly}]
A sequence $S(\cdot)$ is regularly varying if and only if it can be written as:
\[S(i) = i^{\alpha} L(i),\]
where $-\infty < \alpha < \infty$ and $L(\cdot)$ is a slowly varying sequence.
\end{defi}
This includes the case of polynomial regression, where the columns are of the form: $x_{i,j} = i^{j}$.

\begin{prop}
Assume that each column $X_{.,j}$ is regularly varying with parameter $\alpha_{j}$.
If the parameters $\alpha_{j}$ are all strictly greater than $-\frac{1}{2}$, then Conditions~\eqref{1}, \eqref{2} and \eqref{3} on the design are satisfied. Moreover, for all $j$ and $l$ in $\{1, \ldots, p\}$, the coefficients $\rho_{j,l}(k)$ do not depend on $k$ and are equal to $\frac{\sqrt{2\alpha_{j}+1} \sqrt{2\alpha_{l}+1}}{\alpha_{j}+\alpha_{l}+1}$. Thereby, the design is regular, and~\eqref{4bis} is satisfied provided $\alpha_{j} \neq \alpha_{l}$ for any distinct $j, l$ in $\{1, \ldots, p\}$.
\label{9ajout}
\end{prop}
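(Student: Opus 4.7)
The plan is to apply Karamata's theorem for regularly varying sequences at each step. Write $x_{i,j} = i^{\alpha_j} L_j(i)$ with $L_j$ slowly varying and $\alpha_j > -1/2$. The product of two regularly varying sequences of indices $\alpha$ and $\beta$ is regularly varying of index $\alpha + \beta$, and Karamata's theorem gives
\[
\sum_{m=1}^{n} m^{\gamma} L(m) \sim \frac{n^{\gamma+1} L(n)}{\gamma+1}
\]
for any slowly varying $L$ and any $\gamma > -1$. All four conclusions will follow by applying this repeatedly to $d_j(n)^2$ and to the cross sums in~\eqref{3}.

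Applying Karamata to $d_j(n)^2 = \sum_{m=1}^{n} m^{2\alpha_j} L_j(m)^2$ (with $L_j^2$ slowly varying and $2\alpha_j+1 > 0$) gives $d_j(n)^2 \sim \frac{n^{2\alpha_j+1} L_j(n)^2}{2\alpha_j+1}$; since a power dominates any slowly varying factor, $d_j(n) \to \infty$, establishing~\eqref{1}. For~\eqref{2}, I would split by sign: if $\alpha_j > 0$ then $\sup_{i \leq n} |x_{i,j}| \sim n^{\alpha_j} L_j(n)$, so the ratio is of order $n^{-1/2}$; if $\alpha_j = 0$, Potter's bounds control $\sup_{i \leq n} L_j(i)$ by $n^{\varepsilon} L_j(n)$ for any $\varepsilon > 0$, so the ratio is $O(n^{\varepsilon - 1/2})$; if $-1/2 < \alpha_j < 0$, the supremum is bounded whereas $d_j(n) \to \infty$. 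In each case~\eqref{2} holds.

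For~\eqref{3}, the key observation is that for fixed $k$, $(m+k)^{\alpha_l} L_l(m+k) \sim m^{\alpha_l} L_l(m)$ as $m \to \infty$, uniformly on the bulk of the sum by a Potter-type argument (cutting the sum at some $m_0 \to \infty$ with $m_0 = o(n)$ to absorb the small-index contribution). This yields
\[
\sum_{m=1}^{n-k} x_{m,j}\, x_{m+k,l} \sim \sum_{m=1}^{n} m^{\alpha_j+\alpha_l} L_j(m) L_l(m) \sim \frac{n^{\alpha_j+\alpha_l+1} L_j(n) L_l(n)}{\alpha_j+\alpha_l+1},
\]
by Karamata (using $\alpha_j + \alpha_l + 1 > 0$). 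Combined with $d_j(n) d_l(n) \sim \frac{n^{\alpha_j+\alpha_l+1} L_j(n) L_l(n)}{\sqrt{(2\alpha_j+1)(2\alpha_l+1)}}$, this gives the announced value of $\rho_{j,l}(k)$, manifestly independent of $k$.

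Finally, for~\eqref{4bis}, set $a_j = 2\alpha_j + 1 > 0$, so that $\rho_{j,l}(0) = \frac{2\sqrt{a_j a_l}}{a_j + a_l}$, and write $R(0) = D^{1/2} K D^{1/2}$ with $D = \mathrm{diag}(a_1,\ldots,a_p)$ and $K_{j,l} = \frac{2}{a_j+a_l}$. It suffices to show $K$ is positive definite; the integral representation $\frac{2}{a_j+a_l} = 2\int_{0}^{\infty} e^{-(a_j+a_l) t}\, dt$ exhibits $K$ as the Gram matrix in $\mathbb{L}^{2}(0,\infty)$ of the functions $g_j(t) = \sqrt{2}\, e^{-a_j t}$, which are linearly independent as soon as the $a_j$, equivalently the $\alpha_j$, are pairwise distinct. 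The main technical subtlety is the uniform control needed in~\eqref{3}: one must absorb the shift $m \mapsto m+k$ by a factor $1+o(1)$ along the sum, which is where Potter's bounds and the cut at $m_0 = o(n)$ become essential.
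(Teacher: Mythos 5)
Your proof is correct, and it actually does more than the paper's own proof in two places. On the common core — Conditions (C2) and (C4) — you and the paper rest on the same pillar, Karamata's theorem for sums of regularly varying sequences (invoked in the paper as Proposition $2.2.1$ of Pipiras and Taqqu), but you absorb the shift $m \mapsto m+k$ differently: the paper splits the cross sum into a main term plus the difference term $\sum_{m} m^{\alpha_{j}}\bigl((m+k)^{\alpha_{l}} - m^{\alpha_{l}}\bigr)L(m)L'(m+k)$ and kills the latter by mean-value bounds in three separate cases ($\alpha_{l} \geq 1$, $0 < \alpha_{l} < 1$, $-\frac{1}{2} < \alpha_{l} < 0$), whereas you use the uniform convergence theorem for slowly varying sequences together with Potter's bounds and a cut at $m_{0} = o(n)$; your version treats all cases at once, at the price of verifying that the head $m \leq m_{0}$ is negligible, which a choice such as $m_{0} = n^{\delta}$ with $\delta$ small does ensure since $\alpha_{j} + \alpha_{l} + 1 > 0$ (you should also note the end correction $\sum_{m=n-k+1}^{n}$, which is of smaller order for fixed $k$). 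For (C3), the paper merely states that it is "immediately checked"; your three-case analysis according to the sign of $\alpha_{j}$ is an actual proof of it, and it is sound: for $\alpha_{j} > 0$ the running maximum is $O(n^{\alpha_{j}}L_{j}(n))$ by Potter, for $\alpha_{j} = 0$ the bound $O(n^{\varepsilon}L_{j}(n))$ with $\varepsilon < \frac{1}{2}$ suffices, and for $\alpha_{j} < 0$ the numerator is bounded while $d_{j}(n) \to \infty$. Most significantly, the paper's proof stops after computing $\rho_{j,l}(k)$ and never proves the positive definiteness claim (C5) asserted in the statement; your argument fills this gap cleanly: writing $a_{j} = 2\alpha_{j}+1$, the factorization $R(0) = D^{1/2}KD^{1/2}$ with $K_{j,l} = \frac{2}{a_{j}+a_{l}}$ and the Gram representation $\frac{2}{a_{j}+a_{l}} = \int_{0}^{\infty} \bigl(\sqrt{2}\,e^{-a_{j}t}\bigr)\bigl(\sqrt{2}\,e^{-a_{l}t}\bigr)\,dt$ exhibits $K$ as a Cauchy-type Gram matrix, positive definite exactly when the $a_{j}$ (equivalently the $\alpha_{j}$) are pairwise distinct — which is precisely the hypothesis under which the proposition claims (C5). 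In short: same Karamata backbone for the limits, a mildly different and more unified treatment of the shift, and a genuine addition on (C5) that the paper's proof omits.
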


Another important class of regular designs are the ANOVA type designs. An ANOVA design is represented by a matrix whose column vectors are mutually orthogonal. Each coordinate of a column is either $0$ or $1$, with consecutive sequences of $1$'s. The number of $0$'s and $1$'s in each column tends to infinity as $n$ tends to infinity.

Note that a design whose columns are either ANOVA or regularly varying is again a regular design. 

\subsection{The asymptotic covariance matrix for regular design}

For regular design, the asymptotic covariance matrix is easy to compute. Actually, we shall see that it is the same as in the case where the errors are independent up to a multiplicative factor. 
 
More precisely, the usual variance term $\sigma^{2} = \mathbb{E}(\epsilon_{0}^{2})$ should be replaced by the sum of covariances: $\sum_{k} \gamma(k)$.

Since the coefficients $\rho_{j,l}(k)$ are constant, the spectral measure $F_{X}$ is the product of a Dirac mass at $0$, denoted $\delta_{0}$, with the matrix $R(k)$; consequently the spectral measure $F_{X}$ is equal to $\delta_{0}R(0)$. Notice that, in the case of regular design, the matrix $R(k) = [\rho_{j,l}(k)]$ is equal to $R(0) = [\rho_{j,l}(0)]$.

Thereby the matrix $F$ and $G$ can be computed explicitly:
\begin{equation}
F = \frac{1}{2\pi} \int_{-\pi}^{\pi} F_{X}(d\lambda) = \frac{1}{2\pi} \int_{-\pi}^{\pi} R(0) \delta_{0}(d\lambda) = \frac{1}{2\pi} R(0),
\label{10}
\end{equation}
\begin{equation}
G = \frac{1}{2\pi} \int_{-\pi}^{\pi} F_{X}(d\lambda) \otimes f(\lambda) = \frac{1}{2\pi} \int_{-\pi}^{\pi} R(0) \otimes f(\lambda) \delta_{0}(d\lambda) = \frac{1}{2\pi} R(0) \otimes f(0) = f(0)F.
\label{11}
\end{equation}
Thus, using~\eqref{10} and~\eqref{11}, the covariance matrix can be written as:
\[F^{-1}GF^{-1} = f(0)F^{-1}.\]

The connection between the spectral density and the autocovariance function is known:
\[f(\lambda) = \frac{1}{2\pi} \sum_{k=-\infty}^{\infty} \gamma(k) e^{- i k \lambda}, \qquad \lambda \in [-\pi, \pi].\]
and at the point $0$:
\[f(0) =\frac{1}{2\pi} \sum_{k=-\infty}^{\infty} \gamma(k).\]

Thereby the covariance matrix can be written:
\[f(0)  F^{-1} = \left( \frac{1}{2\pi} \sum_{k=-\infty}^{\infty} \gamma(k) \right) F^{-1} = \left( \sum_{k=-\infty}^{\infty} \gamma(k) \right) R(0)^{-1},\]
since $F = \frac{R(0)}{2 \pi}$ and $F^{-1} = 2 \pi R(0)^{-1}$.\\

In conclusion, for regular design the following corollary holds:

\begin{Cor}
Under the assumptions of Theorem~\ref{8}, if moreover the design $X$ is regular, then:
\begin{equation}
D(n)(\hat{\beta} - \beta) \xrightarrow[n \rightarrow \infty]{\mathcal{L}} \mathcal{N} \left( 0, \left( \sum_{k=-\infty}^{\infty} \gamma(k) \right) R(0)^{-1} \right),
\label{15}
\end{equation}
and we have the convergence of the second order moment:
\begin{equation}
\mathbb{E} \left( D(n) (\hat{\beta} - \beta) (\hat{\beta} - \beta)^{t} D(n)^{t} \right) \xrightarrow[n \rightarrow \infty]{} \left( \sum_{k=-\infty}^{\infty} \gamma(k) \right) R(0)^{-1}.
\label{15bis}
\end{equation}
\label{15ter}
\end{Cor}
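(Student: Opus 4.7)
The plan is to invoke Theorem~\ref{8} directly and then exploit the regularity hypothesis to simplify the asymptotic covariance matrix $F^{-1}GF^{-1}$ to the claimed form. Since the hypotheses of Theorem~\ref{8} are already assumed, the non-trivial content of the corollary lies entirely in the algebraic identification of $F^{-1}GF^{-1}$ with $\bigl(\sum_{k}\gamma(k)\bigr)R(0)^{-1}$; consequently \eqref{15} follows from \eqref{7} by substitution, and \eqref{15bis} follows from \eqref{9} in exactly the same way.

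The first step is to observe that regularity, i.e.\ $\rho_{j,l}(k)=\rho_{j,l}(0)$ for every $k$, forces the $p\times p$ matrix-valued spectral measure $F_X$ on $[-\pi,\pi]$ to be concentrated at the origin. Indeed, the defining relation \eqref{4} says that every Fourier coefficient of $F_X$ equals $R(0)$, and the Dirac measure $R(0)\,\delta_{0}$ trivially has the same Fourier coefficients. I would then invoke uniqueness of Fourier--Stieltjes coefficients of bounded (matrix-valued) measures, applied entrywise to $F_X$, to conclude $F_X = R(0)\,\delta_{0}$.

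Once this identification is established, plugging $F_X=R(0)\,\delta_{0}$ into \eqref{5} and \eqref{6} gives $F=\tfrac{1}{2\pi}R(0)$ and, because $f(\lambda)$ is scalar, $G=\tfrac{1}{2\pi}f(0)R(0)=f(0)F$. Therefore $F^{-1}GF^{-1}=f(0)F^{-1}=2\pi f(0) R(0)^{-1}$. Hannan's condition \eqref{0} implies absolute summability of $\gamma$ through \eqref{0bis}, so the classical identity $f(0)=\tfrac{1}{2\pi}\sum_{k}\gamma(k)$ is valid and yields the announced covariance $\bigl(\sum_{k}\gamma(k)\bigr)R(0)^{-1}$.

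The only delicate point is the measure-theoretic step $F_X=R(0)\,\delta_{0}$; everything else is routine substitution into the conclusion of Theorem~\ref{8}. This step is more formal than deep, but it does merit care because $F_X$ is matrix-valued, so one should argue entrywise or invoke an appropriate vector-valued uniqueness theorem for Fourier--Stieltjes transforms before concluding.
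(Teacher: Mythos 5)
Your proof is correct and follows essentially the same route as the paper: identify $F_{X}=R(0)\,\delta_{0}$ from the constancy of the $\rho_{j,l}(k)$, substitute into~\eqref{5} and~\eqref{6} to obtain $F=\frac{1}{2\pi}R(0)$ and $G=f(0)F$, and conclude via $f(0)=\frac{1}{2\pi}\sum_{k}\gamma(k)$, whose validity is guaranteed by the absolute summability~\eqref{0bis}. Your only addition is to spell out the uniqueness-of-Fourier--Stieltjes-coefficients argument (applied entrywise to the matrix-valued measure) behind $F_{X}=R(0)\,\delta_{0}$, a step the paper simply asserts, and that extra care is sound.
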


One can see that, in the case of regular design, the asymptotic covariance matrix is similar to the one in the case where the random variables ($\epsilon_{i}$) are i.i.d.; the variance term $\sigma^{2}$ is replaced by the series of covariances.
Actually the matrix $R(0)^{-1}$ is the normalised limit of the matrix $(X^{t}X)^{-1}$. It is formed by the coefficients $\rho_{j,l}(0)$, which are, in this case, the limit of the normalised scalar products between the columns of the design. 

Thus, to obtain confidence regions and tests for $\beta$, an estimator of the covariance matrix is needed. More precisely, it is necessary to estimate the quantity:
\begin{equation}
\sum_{k=-\infty}^{\infty} \gamma(k).
\label{16}
\end{equation}

\section{Estimation of the series of covariances}

The properties of spectral density estimates have been discussed in many classical textbooks on time series; see, for instance,  Anderson \cite{anderson2011statistical}, Brillinger \cite{brillinger2001time}, Brockwell and Davis \cite{brockwell2013time}, Grenander and Rosenblatt \cite{grenander2008statistical}, Priestley \cite{priestley1981spectral} and Rosenblatt \cite{rosenblatt2012stationary} among others. But many of the previous results require restrictive conditions on the underlying processes (linear structure or strong mixing conditions). Wu \cite{wuspectraldensity} has developed an asymptotic theory for the spectral density estimate $f_{n}(\lambda)$, defined at~\eqref{17}, which extends the applicability of spectral analysis to nonlinear and/or non-strong mixing processes. In particular, he also proved a Central Limit Theorem and deviation inequalities for $f_{n}(\lambda)$. However, to show his results, Wu uses a notion of dependence that is more restrictive than Hannan's.

In this section, we propose an estimator of the spectral density under Hannan's dependence condition. Here, contrary to the precise results of Wu (Central Limit Theorem, deviation inequalities), we shall only focus on the consistency of the estimator. 

Let us first consider a preliminary random function defined as follows, for $\lambda$ in $[-\pi,\pi]$:
\begin{equation}
f_{n}(\lambda) = \frac{1}{2\pi} \sum_{|k| \leq n-1} K \left( \frac{|k|}{c_{n}} \right) \hat{\gamma}_{k} e^{i k \lambda},
\label{17}
\end{equation}
where:
\begin{equation}
\hat{\gamma}_{k} = \frac{1}{n} \sum_{j=1}^{n-|k|} \epsilon_{j} \epsilon_{j+|k|}, \qquad 0 \leq |k| \leq (n-1),
\label{18}
\end{equation}
and $K$ is the kernel defined by:
\[
\left\{
\begin{array}{r c l}
K(x) &=& 1 \phantom{- |x| 1 1} \quad  if\  |x| < 1\\
K(x) &=& 2 - |x| \phantom{1\quad} if\  1 \leq |x| \leq 2\\
K(x) &=& 0 \phantom{- |x| 1 1}  \quad if\  |x| > 2.\\
\end{array}
\right.
\]
The sequence of positive real numbers $c_{n}$ is such that $c_{n}$ tends to infinity and $\frac{c_{n}}{n}$ tends to $0$ when $n$ tends to infinity.

In our context, $(\epsilon_{i})_{i \in \{1, \ldots, n\}}$ is not observed. Only the residuals are available:
\[\hat{\epsilon}_{i} = Y_{i} - (x_{i})^{t} \hat{\beta} = Y_{i} - \sum_{j=1}^{p} x_{i,j} \hat{\beta}_{j},\]
because only the data $Y$ and the design $X$ are observed. Consequently, we consider the following estimator:
\begin{equation}
f_{n}^{\ast}(\lambda) = \frac{1}{2\pi} \sum_{|k| \leq n-1} K \left( \frac{|k|}{c_{n}} \right) \hat{\gamma}_{k}^{\ast} e^{i k \lambda}, \qquad \lambda \in [-\pi,\pi],
\label{19}
\end{equation}
where:
\[\hat{\gamma}_{k}^{\ast} = \frac{1}{n} \sum_{j=1}^{n-|k|} \hat{\epsilon}_{j} \hat{\epsilon}_{j+|k|}, \qquad 0 \leq |k| \leq (n-1).\]

\begin{theo}
Let $c_{n}$ be a sequence of positive real numbers such that $c_{n}$ tends to infinity as $n$ tends to infinity, and:
\begin{equation}
c_{n} \mathbb{E} \left( \left| \epsilon_{0} \right|^{2} \left( 1 \wedge \frac{c_{n}}{n} \left| \epsilon_{0} \right|^{2} \right) \right) \xrightarrow[n \rightarrow \infty]{} 0.
\label{48}
\end{equation}
Then, under the assumptions of Theorem~\ref{8}:
\begin{equation}
\sup_{\lambda \in [-\pi,\pi]} \left \| f_{n}^{\ast}(\lambda) - f(\lambda) \right \|_{\mathbb{L}^{1}} \xrightarrow[n \rightarrow \infty]{} 0.
\label{49}
\end{equation}
\label{50}
\end{theo}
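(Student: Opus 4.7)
The plan is to decompose the error in the standard three-term form
\[
f_n^{\ast}(\lambda) - f(\lambda) = \bigl[f_n^{\ast}(\lambda) - f_n(\lambda)\bigr] + \bigl[f_n(\lambda) - \mathbb{E}(f_n(\lambda))\bigr] + \bigl[\mathbb{E}(f_n(\lambda)) - f(\lambda)\bigr]
\]
and show that each piece tends to $0$ uniformly in $\lambda$ in $\mathbb{L}^{1}$. The bias term is deterministic: since $\mathbb{E}(\hat\gamma_{k}) = (1-|k|/n)\gamma(k)$, one gets
\[
\mathbb{E}(f_n(\lambda)) - f(\lambda) = \frac{1}{2\pi} \sum_{|k|\leq n-1} \Bigl[K(|k|/c_n)\bigl(1-|k|/n\bigr) - 1\Bigr]\gamma(k)e^{ik\lambda} - \frac{1}{2\pi}\sum_{|k|\geq n}\gamma(k)e^{ik\lambda}.
\]
By Hannan's condition we have $\sum_{k}|\gamma(k)|<\infty$ (equation~\eqref{0bis}), and for each fixed $k$ the bracket tends to $0$ since $c_n\to\infty$ and $c_n/n\to 0$; with $|K|\leq 1$ and $|e^{ik\lambda}|=1$, dominated convergence gives the uniform vanishing of the bias.

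The crucial step is the stochastic term $f_n - \mathbb{E}(f_n)$. Since Hannan's condition gives no information beyond $\mathbb{L}^{2}$ on the marginals, I will truncate at the level $M_n \asymp \sqrt{n/c_n}$: write $\epsilon_i = \epsilon_i^{\leq} + \epsilon_i^{>}$, where $\epsilon_i^{\leq}$ is the centered truncated part, and split $f_n(\lambda)$ into the purely truncated part, a purely tail part, and mixed cross terms. For the bounded truncated piece, a direct variance computation using only the summability of $|\gamma(k)|$ and the boundedness $|\epsilon_i^{\leq}|\leq M_n$ yields a bound of order $c_n^{2}n^{-1}\mathbb{E}(\epsilon_0^{2}\mathbf{1}_{|\epsilon_0|\leq M_n})^{2}$; for the tail piece, bounding in $\mathbb{L}^{1}$ directly (using $|K|\leq 1$ and $|\{|k|\leq c_n\}|\lesssim c_n$) gives a bound of order $c_n\mathbb{E}(\epsilon_0^{2}\mathbf{1}_{|\epsilon_0|>M_n})$. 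Both bounds are exactly those appearing in assumption~\eqref{48}: with $M_n^{2}=n/c_n$, the identity
\[
c_n\,\mathbb{E}\!\left(\epsilon_0^{2}\Bigl(1\wedge \frac{c_n}{n}\epsilon_0^{2}\Bigr)\right) = c_n\,\mathbb{E}(\epsilon_0^{2}\mathbf{1}_{|\epsilon_0|>M_n}) + \frac{c_n^{2}}{n}\,\mathbb{E}(\epsilon_0^{4}\mathbf{1}_{|\epsilon_0|\leq M_n})
\]
shows that~\eqref{48} is precisely calibrated to kill the two pieces simultaneously. Cauchy-Schwarz from $\mathbb{L}^{1}$ to $\mathbb{L}^{2}$ then yields the required uniform bound.

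For the residual-versus-true-error correction, I will use $\hat\epsilon_j = \epsilon_j - x_j^{t}(\hat\beta - \beta)$ to expand $\hat\gamma_k^{\ast} - \hat\gamma_k$ as a finite sum (over $p^{2}$ index pairs $(l,l')$) of terms of the form $(\hat\beta_l - \beta_l)(\hat\beta_{l'}-\beta_{l'}) \cdot n^{-1}\sum x_{j,l}x_{j+|k|,l'}$ and cross terms of the form $(\hat\beta_l - \beta_l)\cdot n^{-1}\sum x_{j,l}\epsilon_{j+|k|}$. Take $\mathbb{L}^{1}$ norms, apply Cauchy-Schwarz, and use~\eqref{9} in Theorem~\ref{8} to bound $\|d_l(n)(\hat\beta_l-\beta_l)\|_{\mathbb{L}^{2}}$ by a constant. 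The deterministic sums $n^{-1}\sum x_{j,l}x_{j+|k|,l'}$ are bounded via~\eqref{3} by $d_l(n)d_{l'}(n)/n$, and combining with the $d_l(n)^{-1}$ factors from the estimator and the fact that the sum over $|k|\leq c_n$ of $K(|k|/c_n)$ is of order $c_n=o(n)$, everything vanishes uniformly in $\lambda$.

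The main obstacle is the second step: adapting the standard variance calculation for kernel spectral estimators to the Hannan framework, where only $\mathbb{L}^{2}$ integrability of the innovations is available and where the usual fourth-order cumulant summability assumptions of Anderson/Brillinger/Wu are not at hand. The truncation level $M_n=\sqrt{n/c_n}$ matched against condition~\eqref{48} is what makes the argument go through and replaces the stronger moment assumptions typically imposed in the literature.
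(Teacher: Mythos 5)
Your bias term and your residual-correction term essentially coincide with the paper's proof: the first is Lemma~\ref{205bis} (dominated convergence with $\sum_{k}|\gamma(k)|<\infty$), and the second is Proposition~\ref{201bis} together with Lemma~\ref{6.2.7} (expand $\hat{\gamma}_{k}^{\ast}-\hat{\gamma}_{k}$, apply Cauchy--Schwarz, and use the moment convergence~\eqref{9} to get $d_{l}(n)^{2}\,\mathbb{E}\bigl((\hat{\beta}_{l}-\beta_{l})^{2}\bigr)=\mathcal{O}(1)$, then sum over $|k|\leq 2c_{n}$). The genuine gap is in your middle term $f_{n}-\mathbb{E}(f_{n})$. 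You claim that for the truncated part a ``direct variance computation using only the summability of $|\gamma(k)|$ and the boundedness $|\epsilon_{i}^{\leq}|\leq M_{n}$'' yields a bound of order $c_{n}^{2}n^{-1}\bigl(\mathbb{E}(\epsilon_{0}^{2}\mathbf{1}_{|\epsilon_{0}|\leq M_{n}})\bigr)^{2}$. This step fails: the variance of $\hat{\gamma}_{k}$ involves $\mathrm{Cov}\bigl(\epsilon_{i}^{\leq}\epsilon_{i+k}^{\leq},\,\epsilon_{j}^{\leq}\epsilon_{j+k}^{\leq}\bigr)$, a fourth-order joint quantity, and summability of $\gamma$ is a purely second-order property that gives no decay in $|i-j|$ for covariances of \emph{products}. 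Truncation addresses heavy tails, not dependence: if $\epsilon_{0}$ is already bounded your truncation is vacuous, and your claim amounts to an $\mathcal{O}(c_{n}^{2}/n)$ variance bound for \emph{every} bounded process satisfying Hannan's condition~\eqref{0} --- exactly the kind of statement that requires the fourth-order cumulant summability (Anderson/Brillinger) you correctly observe is unavailable here. With only boundedness, each covariance of products is bounded by $M_{n}^{2}\sigma^{2}=(n/c_{n})\sigma^{2}$, which diverges. A secondary (fixable) flaw: your Cauchy--Schwarz bound on the cross terms gives $c_{n}\sqrt{\mathbb{E}(\epsilon_{0}^{2}\mathbf{1}_{|\epsilon_{0}|>M_{n}})}$, which under~\eqref{48} is only $o(\sqrt{c_{n}})$; one must instead use $|\epsilon^{\leq}|\leq M_{n}$ and $\mathbb{E}\bigl(|\epsilon_{0}|\mathbf{1}_{|\epsilon_{0}|>M_{n}}\bigr)\leq M_{n}^{-1}\mathbb{E}\bigl(\epsilon_{0}^{2}\mathbf{1}_{|\epsilon_{0}|>M_{n}}\bigr)$ to get $c_{n}\mathbb{E}(\epsilon_{0}^{2}\mathbf{1}_{|\epsilon_{0}|>M_{n}})\to 0$.

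The missing idea, which is how the paper actually proves Proposition~\ref{200bis}, is to create the decorrelation by a conditional-expectation approximation rather than by truncation. The paper interposes $\tilde{\epsilon}_{i,m}=\mathbb{E}(\epsilon_{i}|\mathcal{F}_{i+m})-\mathbb{E}(\epsilon_{i}|\mathcal{F}_{i-m})$ and the associated estimator $\tilde{f}_{n}^{m}$: (i) $\sup_{\lambda}\|f_{n}-\tilde{f}_{n}^{m}\|_{\mathbb{L}^{1}}$ is controlled through the periodograms and the Fourier transform of $K$, with Hannan's condition~\eqref{0} making it small as $m\to\infty$ uniformly in $n$ (Lemma~\ref{206bis}) --- this is where the projective structure is genuinely used; (ii) for $\tilde{f}_{n}^{m}-\mathbb{E}(\tilde{f}_{n}^{m})$, products at lags at least $2m$ are \emph{exactly} uncorrelated ($\mathbb{E}(\tilde{\epsilon}_{i,m}\tilde{\epsilon}_{j,m})=0$ for $j\leq i-2m$), and after blocking, the sums form martingale-type increments handled via Dedecker's smooth function $\varphi$ with $\psi(h)\asymp |h|(1\wedge|h|)$ --- this is precisely where~\eqref{48} enters (Lemma~\ref{222bis}), in agreement with your identity $c_{n}\,\mathbb{E}\bigl(\epsilon_{0}^{2}(1\wedge\tfrac{c_{n}}{n}\epsilon_{0}^{2})\bigr)=c_{n}\mathbb{E}(\epsilon_{0}^{2}\mathbf{1}_{|\epsilon_{0}|>M_{n}})+\tfrac{c_{n}^{2}}{n}\mathbb{E}(\epsilon_{0}^{4}\mathbf{1}_{|\epsilon_{0}|\leq M_{n}})$ for $M_{n}=\sqrt{n/c_{n}}$; (iii) the finitely many lags smaller than $2m$ are handled by the rateless $\mathbb{L}^{1}$-ergodic theorem, which suffices because $m$ is fixed before $n\to\infty$ ($\lim_{m}\limsup_{n}$). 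So your calibration of~\eqref{48} correctly decodes its role, but the variance computation you rely on presupposes a fourth-order decorrelation that must be manufactured by the $m$-approximation; as written, the proposal does not prove the theorem.
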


\begin{Rem}
If $\epsilon_{0}$ is in $\mathbb{L}^{2}$, then there exists $c_{n} \rightarrow \infty$ such that~\eqref{48} holds.
\end{Rem}
\begin{Rem}
Let us suppose that the random variable $\epsilon_{0}$ is such that $\mathbb{E} \left( \left | \epsilon_{0} \right |^{\delta+2} \right) < \infty$, with $\delta \in ]0,2]$. Since for all real $x$, $1 \wedge |x|^{2} \leq |x|^{\delta}$, we have:
\[c_{n} \mathbb{E} \left( \left| \epsilon_{0} \right|^{2} \left( 1 \wedge \frac{c_{n}}{n} \left| \epsilon_{0} \right|^{2} \right) \right)  \leq c_{n} \mathbb{E} \left( \left| \epsilon_{0} \right|^{2} \frac{c_{n}^{\delta/2}}{n^{\delta/2}} |\epsilon_{0}|^{\delta} \right) \leq \frac{c_{n}^{1+\delta/2}}{n^{\delta/2}} \mathbb{E} \left( \left | \epsilon_{0} \right |^{\delta+2} \right).\] 
Thus if $c_{n}$ satisfies $\frac{c_{n}^{1+\delta/2}}{n^{\delta/2}} \xrightarrow[n \rightarrow \infty]{} 0$, then~\eqref{48} holds.
In particular, if the random variable $\epsilon_{0}$ has a fourth order moment, then the condition on $c_{n}$ is $\frac{c_{n}^{2}}{n} \xrightarrow[n \rightarrow \infty]{} 0$.
\end{Rem}

Theorem~\ref{8} implies the following result:

\begin{Cor}
Under the assumptions of Corollary \ref{15ter}, and if $f(0) > 0$, then:
\begin{equation}
\frac{R(0)^{\frac{1}{2}}}{\sqrt{2\pi f_{n}^{\ast}(0) }} D(n)(\hat{\beta} - \beta) \xrightarrow[n \rightarrow \infty]{\mathcal{L}} \mathcal{N}(0,I_{p}),
\label{51}
\end{equation}
where $I_{p}$ is the $p \times p$ identity matrix.
\label{52}
\end{Cor}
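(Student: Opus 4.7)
The strategy is to combine Corollary~\ref{15ter}, which identifies the asymptotic Gaussian law of $D(n)(\hat{\beta}-\beta)$, with the consistency of the spectral density estimator at frequency $0$ proved in Theorem~\ref{50}, and to conclude via Slutsky's lemma. First I would rewrite the limiting covariance appearing in~\eqref{15} by means of the spectral identity
\[\sum_{k \in \mathbb{Z}} \gamma(k) = 2\pi f(0),\]
so that Corollary~\ref{15ter} reads
\[D(n)(\hat{\beta}-\beta) \xrightarrow[n\rightarrow\infty]{\mathcal{L}} \mathcal{N}\bigl(0,\, 2\pi f(0)\, R(0)^{-1}\bigr).\]

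Next, I would invoke Theorem~\ref{50} at $\lambda = 0$: the bound \eqref{49} implies $\|f_n^*(0)-f(0)\|_{\mathbb{L}^{1}} \to 0$, hence $f_n^*(0) \to f(0)$ in probability. Since $f(0)>0$, the map $x \mapsto 1/\sqrt{2\pi x}$ is continuous at $f(0)$, so the continuous mapping theorem yields $(2\pi f_n^*(0))^{-1/2} \xrightarrow{\mathbb{P}} (2\pi f(0))^{-1/2}$. Combining this with the previous convergence in distribution through Slutsky's lemma, and multiplying by the deterministic matrix $R(0)^{1/2}$, I obtain
\[\frac{R(0)^{1/2}}{\sqrt{2\pi f_n^*(0)}}\, D(n)(\hat{\beta}-\beta) \xrightarrow[n\rightarrow\infty]{\mathcal{L}} \frac{R(0)^{1/2}}{\sqrt{2\pi f(0)}}\, Z, \qquad Z \sim \mathcal{N}\bigl(0,\, 2\pi f(0)\, R(0)^{-1}\bigr).\]

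To finish, I would compute the covariance of the limiting vector. Since $R(0)^{1/2}$ is symmetric, the limit has covariance
\[\frac{1}{2\pi f(0)}\, R(0)^{1/2}\, \bigl(2\pi f(0)\, R(0)^{-1}\bigr)\, R(0)^{1/2} \;=\; R(0)^{1/2}\, R(0)^{-1}\, R(0)^{1/2} \;=\; I_p,\]
which identifies the limit as $\mathcal{N}(0, I_p)$, as claimed. The only mild subtlety — not a genuine obstacle — is that $f_n^*(0)$ is not guaranteed to be positive for every $n$, so the normalisation is a priori ill-defined. However, by consistency, $\mathbb{P}(f_n^*(0) \le 0) \to 0$, and one may set the left-hand side of~\eqref{51} to an arbitrary value on this negligible event without affecting the weak limit. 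Once Theorem~\ref{50} is available, Corollary~\ref{52} is essentially a direct consequence of Slutsky's lemma and the explicit form of the covariance given by Corollary~\ref{15ter}.
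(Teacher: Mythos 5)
Your proof is correct and follows exactly the route the paper intends: the paper states Corollary~\ref{52} as a direct consequence of Corollary~\ref{15ter} and Theorem~\ref{50} without writing out a proof, and your argument --- rewriting the limiting covariance as $2\pi f(0)R(0)^{-1}$ via $\sum_{k}\gamma(k)=2\pi f(0)$, deducing $f_{n}^{\ast}(0)\xrightarrow{\mathbb{P}}f(0)$ from the $\mathbb{L}^{1}$ bound~\eqref{49}, and concluding by Slutsky's lemma together with the computation $\frac{1}{2\pi f(0)}R(0)^{1/2}\bigl(2\pi f(0)R(0)^{-1}\bigr)R(0)^{1/2}=I_{p}$ --- is precisely the implicit one. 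Your additional remark handling the event $\{f_{n}^{\ast}(0)\leq 0\}$, where the normalisation is a priori ill-defined, is a sound precaution that the paper does not even mention.
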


\begin{Rem}
Let us emphasize that Corollary~\ref{52} can be used to compute confidence regions for the parameter $\beta$ or to modify the usual Fisher tests on the linear regression model in the short-range dependent case. Some simulations show that the procedure works well for a large variety of examples, including non-mixing processes in the sense of Rosenblatt \cite{rosenblatt1956central}, processes that are not $2$-stable in the sense of Wu (see Section $4.2$), or dynamical systems with slowly decaying correlations (such as the Liverani, Saussol and Vaienti maps described in Section $4.4$, for $\gamma < 1/2$). A practical choice of $c_{n}$ can be done by examining the graph of the empirical autocovariance of the residuals.
\end{Rem}

\section{Examples of stationary processes}

In this section, we present some classes of stationary processes satisfying Hannan's condition.

\subsection{Functions of Linear processes}

A large class of stationary processes for which one can check Hannan's condition is the class of smooth functions of linear processes generated by i.i.d. random variables.

Let us take $\Omega = \mathbb{R}^{\mathbb{Z}}$ and $\mathbb{P} = \mu^{\otimes \mathbb{Z}}$, where $\mu$ is a probability measure on $\mathbb{R}$. Let ($\eta_{i}, i \in \mathbb{Z}$) be a sequence of i.i.d. random variables with marginal distribution $\mu$. Let $(a_{i})_{i \in \mathbb{Z}}$ be a sequence of real numbers in $l^{1}$, and assume that $\sum_{i \in \mathbb{Z}} a_{i} \eta_{i}$ is defined almost surely. The random variable $\epsilon_{0}$ is square integrable and is regular with respect to the $\sigma$-algebras : $\mathcal{F}_{i} = \sigma (\eta_{j}, j \leq i)$. We focus on functions of real-valued linear processes:
\[\epsilon_{k} = f \left( \sum_{i \in \mathbb{Z}} a_{i} \eta_{k-i} \right) - \mathbb{E} \left( f \left( \sum_{i \in \mathbb{Z}} a_{i} \eta_{k-i} \right) \right).\]

Let us define the modulus of continuity of $f$ on the interval $[-M, M]$ by:
\[\omega_{\infty,f}(h,M) = \sup_{|t| \leq h, |x| \leq M, |x+t| \leq M} \left | f(x+t) -f(x) \right | .\]
Let $(\eta_{i}')_{i \in \mathbb{Z}}$ be an independent copy of $(\eta_{i})_{i \in \mathbb{Z}}$, and let:
\[M_{k} = \max \left\{ \left | \sum_{i \in \mathbb{Z}} a_{i} \eta_{i}' \right |, \left | a_{k} \eta_{0} + \sum_{i \neq k} a_{i} \eta_{i}' \right | \right\}.\]

According to Section $5$ in the paper of Dedecker, Merlevède, Voln\'y \cite{dmv2007weak}, if the following condition holds:
\begin{equation}
\sum_{k \in \mathbb{Z}} \Big \| \omega_{\infty,f}(|a_{k}| |\eta_{0}|, M_{k}) \wedge \left \| \epsilon_{0} \right \|_{\infty} \Big \|_{\mathbb{L}^{2}} < \infty,
\label{80}
\end{equation}
then Hannan's condition holds.
We have an interesting application if the function $f$ is $\gamma$-Hölder on any compact set; if $\omega_{\infty,f}(h,M) \leq C h^{\gamma} M^{\alpha}$ for some $C > 0$, $\gamma \in ]0,1]$ and $\alpha \geq 0$, then~\eqref{80} holds as soon as $\sum |a_{k}|^{\gamma} < \infty$ and $\mathbb{E}(|\eta_{0}|^{2(\alpha + \gamma)}) < \infty$.

\subsection{$2$-strong stability}

Let us recall in this section the framework used by Wu. We consider stationary processes of the form:
\[\epsilon_{i} = H(\ldots, \eta_{i-1}, \eta_{i}),\]
where $\eta_{i}$, $i$ in $\mathbb{Z}$, are i.i.d. random variables and $H$ is a measurable function. 
Assume that $\epsilon_{0}$ belongs to $\mathbb{L}^{2}$, and let $\eta'_{0}$ be distributed as $\eta_{0}$ and independent of $(\eta_{i})$. Let us define the physical dependence measure in $\mathbb{L}^{2}$ \cite{wudependence}, for $j \geq 0$:
\[\delta_{2}(j) = \left \| \epsilon_{j} - \epsilon_{j}^{\ast} \right \|_{\mathbb{L}^{2}},\]
where $\epsilon_{j}^{\ast}$ is a coupled version of $\epsilon_{j}$ with $\eta_{0}$ in the latter being replaced by $\eta'_{0}$:
\[\epsilon_{j}^{\ast} = H(\ldots, \eta_{-1}, \eta'_{0}, \eta_{1}, \ldots, \eta_{j-1}, \eta_{j}).\]
The sequence $(\epsilon_{i})_{i \in {\mathbb Z}}$ is said to be $2$-strong stable if:
\[\Delta_{2} = \sum_{j=0}^{\infty} \delta_{2}(j) < \infty.\]

As a consequence of Theorem $1$, $(i)-(ii)$ of Wu \cite{wu2005nonlinear}, we infer that if
$(\epsilon_{i})_{i \in {\mathbb Z}}$ is $2$-strong stable, then it satisfies Hannan's condition with respect to the filtration $\mathcal{F}_{i} = \sigma(\eta_{j}, j \leq i)$.
Many examples of $2$-strong stable processes are presented in the paper by Wu \cite{wu2005nonlinear}. We also refer to \cite{wudependence} for other examples.

\subsection{Conditions in the style of Gordin}

According to Proposition $5$ of Dedecker, Merlevède, Voln\'y \cite{dmv2007weak}, Hannan's condition holds if the error process satisfies the two following conditions:
\begin{eqnarray}
\sum_{k=1}^{\infty} \frac{1}{\sqrt{k}} \left \| \mathbb{E}(\epsilon_{k} | \mathcal{F}_{0}) \right \|_{\mathbb{L}^{2}} < \infty \label{81} \\
\sum_{k=1}^{\infty} \frac{1}{\sqrt{k}} \left \| \epsilon_{-k} - \mathbb{E}(\epsilon_{-k} | \mathcal{F}_{0}) \right \|_{\mathbb{L}^{2}} < \infty. \label{82}
\end{eqnarray}

These conditions are weaker than the well-known conditions of Gordin \cite{gordin1969central}, under which a martingale + coboundary decomposition holds in $\mathbb{L}^{2}$.
An application is given in the next subsection.

\subsection{Weak dependent coefficients}

Hannan's condition holds if the error process is weakly dependent. In this case, the $(\epsilon_{i})_{i \in \mathbb{Z}}$ process is $\mathcal{F}$-adapted and Condition~\eqref{82} is always true.

Let us recall the definitions of weak dependence coefficients, introduced by Dedecker and Prieur~\cite{dedecker_prieur}; for all integer $k \geq 0$:

\[\tilde{\phi}(k) = \tilde{\phi}(\mathcal{F}_{0}, \epsilon_{k}) = \sup_{t \in \mathbb{R}} \left \| \mathbb{P}(\epsilon_{k} \leq t | \mathcal{F}_{0}) - \mathbb{P}(\epsilon_{k} \leq t) \right \|_{\infty},\]
and:
\[\tilde{\alpha}(k) = \tilde{\alpha}(\mathcal{F}_{0}, \epsilon_{k}) = \sup_{t \in \mathbb{R}} \left \| \mathbb{P}(\epsilon_{k} \leq t | \mathcal{F}_{0}) - \mathbb{P}(\epsilon_{k} \leq t) \right \|_{\mathbb{L}^{1}}.\]

If $(\epsilon_{i})_{i \in \mathbb{Z}}$ is $\tilde{\phi}$-dependent and is in $\mathbb{L}^{p}$ with $p \in [2, +\infty[$, then by Hölder's inequality:
\[\left \| \mathbb{E}(\epsilon_{k} | \mathcal{F}_{0}) \right \|_{\mathbb{L}^{2}} \leq \left \| \mathbb{E}(\epsilon_{k} | \mathcal{F}_{0}) \right \|_{\mathbb{L}^{p}} \leq \sup_{Z \in B_{\frac{p}{p-1}}(\mathcal{F}_{0})} \mathbb{E}(Z \epsilon_{k}) \leq 2 \tilde{\phi}(k)^{\frac{p-1}{p}} \left \| \epsilon_{0} \right \|_{\mathbb{L}^{p}},\]
where for all $q \in ]1,2]$, $B_{q}(\mathcal{F}_{0})$ is the set of random variables Z, $\mathcal{F}_{0}$-measurable such that $\left \| Z \right \|_{\mathbb{L}^{q}} \leq 1$.

Consequently, if:
\begin{equation}
\sum_{k=1}^{\infty} \frac{1}{\sqrt{k}} \tilde{\phi}(k)^{\frac{p-1}{p}} < \infty,
\label{phitildedep}
\end{equation}
then the condition~\eqref{81} holds, and Hannan's condition is satisfied.\\

Now we look at the $\tilde{\alpha}$-weakly dependent sequence. 
We denote $Q_{\epsilon}$ the generalized inverse function of $x \rightarrow \mathbb{P}(|\epsilon| > x)$. If $(\epsilon_{i})_{i \in \mathbb{Z}}$ is $\tilde{\alpha}$-mixing and verifies that there exists $r \in ]2, +\infty[$, such that $\mathbb{P}(|\epsilon| \geq t) \leq t^{-r}$, then, by Cauchy-Schwarz's inequality and Rio's inequality (Theorem $1.1$ \cite{rio1999theorie}), we get:
\[\left \| \mathbb{E}(\epsilon_{k} | \mathcal{F}_{0}) \right \|_{\mathbb{L}^{2}} = \sup_{Z \in B_{2}(\mathcal{F}_{0})} \mathbb{E}(Z \epsilon_{k}) \leq 2 \left( \int_{0}^{\tilde{\alpha}(k)} Q_{\epsilon_{k}}^{2}(u) du \right)^{\frac{1}{2}}.\]
But:
\[\int_{0}^{\tilde{\alpha}(k)} Q_{\epsilon_{k}}^{2}(u) du \leq \int_{0}^{\tilde{\alpha}(k)} \frac{1}{u^{\frac{2}{r}}} du \leq \tilde{\alpha}(k)^{1-\frac{2}{r}}.\]
Hence, if:
\begin{equation}
\sum_{k=1}^{\infty} \frac{\tilde{\alpha}(k)^{\frac{1}{2} - \frac{1}{r}}}{\sqrt{k}} < \infty,
\label{16bis}
\end{equation}
then~\eqref{81} is true, and Hannan's condition is satisfied.

Notice that all we have written for $\tilde{\alpha}$-dependent sequences is also true for $\alpha$-mixing processes in the sense of Rosenblatt \cite{rosenblatt2012stationary}.\\

We now give two examples to which our results apply.
For the first example, let us consider the process ($Z_{1}, \ldots, Z_{n}$), according to the AR(1) equation: $Z_{k+1} = \frac{1}{2}(Z_{k} + \eta_{k+1})$, where $Z_{1}$ is uniformly distributed over $[0,1]$, and $(\eta_{i})_{i \geq 2}$ is a sequence of i.i.d. random variables with distribution $\mathcal{B}(1/2)$, independent of $Z_{1}$.
The transition kernel of the chain $(Z_{i})_{i \geq 1}$ is: 
\[K(g)(x) = \frac{1}{2} \left( g \left( \frac{x}{2} \right) + g \left( \frac{x+1}{2} \right) \right),\]
and the uniform distribution on $[0,1]$ is the unique invariant distribution by $K$. Hence, the chain $(Z_{i})_{i \geq 1}$ is strictly stationary.
Furthermore, it is not $\alpha$-mixing in the sense of Rosenblatt \cite{bradley1985basic}, but it is $\tilde{\phi}$-dependent. Indeed, one can prove that the coefficient $\tilde{\phi}$ of the chain $(Z_{i})_{i \geq 1}$ decreases geometrically \cite{dedecker_prieur}: $\tilde{\phi}(k) \leq 2^{-k}$. 
Let us now consider the error process built from $(Z_{i})_{i \geq 1}$:
\[\epsilon_{k} = f(Z_{k}) - \mathbb{E}(f(Z_{k})),\]
with $f$ a monotonic function from $]0,1[$ to $\mathbb{R}$, such that $\int_{0}^{1} f(x)^{2} dx < \infty$. Then the coefficients $\tilde{\phi}$ of the process $(\epsilon_{i})_{i \geq 1}$ still decrease geometrically. Consequently~\eqref{phitildedep} (and hence Hannan's condition) is satisfied for the error process $(\epsilon_{i})_{i \geq 1}$. \\

For the second example, we consider the intermittent map $\theta_{\gamma}$ from $[0,1]$ to $[0,1]$, with $\gamma$ in $]0,1[$, introduced by Liverani, Saussol and Vaienti \cite{liverani1999probabilistic}:

\[\theta_{\gamma}(x) = 
\left\{
\begin{array}{r c l}
x(1 + 2^{\gamma} x^{\gamma}) \qquad \text{if} \ x \in [0, 1/2[ \\
2x - 1 \qquad \text{if} \ x \in [1/2, 1].\\
\end{array}
\right.\]
It follows from \cite{liverani1999probabilistic} that there exists a unique absolutely continuous $\theta_{\gamma}$-invariant probability measure $\nu_{\gamma}$, with density $h_{\gamma}$.

Let us briefly describe the Markov chain associated with $\theta_{\gamma}$, and its properties. Let first $K_{\gamma}$ be the Perron-Frobenius operator of $\theta_{\gamma}$ with respect to $\nu_{\gamma}$, defined as follows: for any functions $u$, $v$ in $\mathbb{L}^{2}([0,1], \nu_{\gamma})$:
\[\nu_{\gamma}(u \cdot v \circ \theta_{\gamma}) = \nu_{\gamma}(K_{\gamma}(u) \cdot v).\]
The operator $K_{\gamma}$ is a transition kernel, and $\nu_{\gamma}$ is invariant by $K_{\gamma}$. Let now $(\xi_{i})_{i \geq 1}$ be a stationary Markov chain with transition kernel $K_{\gamma}$, and consider the error process built from $(\xi_{i})_{i \geq 1}$: $\epsilon_{k} = \xi_{k} - \mathbb{E}(\xi_{k})$. It is proved in \cite{dedecker2010some} that there exists two positive constants $A, B$ such that:
\[\frac{A}{(n+1)^{\frac{1-\gamma}{\gamma}}} \leq \tilde{\alpha}_{\epsilon}(n) \leq \frac{B}{(n+1)^{\frac{1-\gamma}{\gamma}}}.\]
Moreover, the chain $(\epsilon_{i})_{i \geq 1}$ is not $\alpha$-mixing in the sense of Rosenblatt \cite{rosenblatt1956central}.

Note that the error process $(\epsilon_{i})$ satisfies the condition~\eqref{16bis} (and hence Hannan's condition) if and only if $\gamma$ is in $]0,\frac{1}{2}[$, which corresponds to the short-range dependent case. If $\gamma$ is in $]\frac{1}{2},1[$, the error process is long-range dependent (see the Introduction of the paper~\cite{dedecker2015weak}).

\section{Proofs}

\subsection{Proposition~\ref{9ajout}}

\begin{proof}

Let us define:
\[d_{j}(n) = || X_{.,j} ||_{2} = \sqrt{\sum_{i=1}^{n} i^{2 \alpha_{j}} L(i)^{2}}.\] 
The condition~\eqref{1} is verified if:
\begin{equation}
\sum_{i=1}^{n} i^{2 \alpha_{j}} L(i)^{2} \rightarrow \infty.
\label{proof1}
\end{equation}
When $2 \alpha_{j} < -1$, it is known that~\eqref{proof1} converges. 
However, for $2 \alpha_{j} > -1$, thanks to Proposition $2.2.1$ of Pipiras and Taqqu \cite{pipiras2017long}, we have the following equivalence:
\[\sum_{i=1}^{n} i^{2 \alpha_{j}} L(i)^{2} \sim \frac{n^{2 \alpha_{j}+1} L(n)^{2}}{2 \alpha_{j}+1},\]
and this quantity diverges as $n$ tends to infinity. 
Thus the condition~\eqref{1} is satisfied if $\alpha_{j}$ is strictly greater than $-\frac{1}{2}$. We also immediately check that~\eqref{2} is satisfied. \\

Now let us compute the coefficients $\rho_{j,l}(k)$ and prove that they do not depend on $k$. For $j, l$ belonging to $\{1, \ldots, p\}$:
\[\sum_{m=1}^{n-k} \frac{x_{m,j}x_{m+k,l}}{d_{j}(n)d_{l}(n)} = \frac{\sum_{m=1}^{n-k} m^{\alpha_{j}} L(m) (m+k)^{\alpha_{l}} L'(m+k)}{\sqrt{\sum_{i=1}^{n} i^{2 \alpha_{j}} L(i)^{2}} \sqrt{\sum_{q=1}^{n} q^{2 \alpha_{l}} L'(q)^{2}}},\]
and we have:
\begin{multline}
\frac{\sum_{m=1}^{n-k} m^{\alpha_{j}} L(m) (m+k)^{\alpha_{l}} L'(m+k)}{\sqrt{\sum_{i=1}^{n} i^{2 \alpha_{j}} L(i)^{2}} \sqrt{\sum_{q=1}^{n} q^{2 \alpha_{l}} L'(q)^{2}}} \\
= \frac{\sum_{m=1}^{n-k} (m^{\alpha_{j}} ((m+k)^{\alpha_{l}} - m^{\alpha_{l}})) L(m) L'(m+k)}{\sqrt{\sum_{i=1}^{n} i^{2 \alpha_{j}} L(i)^{2}} \sqrt{\sum_{q=1}^{n} q^{2 \alpha_{l}} L'(q)^{2}}} \\
+ \frac{\sum_{m=1}^{n-k} m^{\alpha_{j}} m^{\alpha_{l}} L(m) L'(m+k)}{\sqrt{\sum_{i=1}^{n} i^{2 \alpha_{j}} L(i)^{2}} \sqrt{\sum_{q=1}^{n} q^{2 \alpha_{l}} L'(q)^{2}}}. 
\label{9bis}
\end{multline}

Let us deal with the first term of the right-hand side in~\eqref{9bis}. If $\alpha_{l} \geq 1$, we get:
\begin{multline*}
\frac{\sum_{m=1}^{n-k} (m^{\alpha_{j}} ((m+k)^{\alpha_{l}} - m^{\alpha_{l}})) L(m) L'(m+k)}{\sqrt{\sum_{i=1}^{n} i^{2 \alpha_{j}} L(i)^{2}} \sqrt{\sum_{q=1}^{n} q^{2 \alpha_{l}} L'(q)^{2}}} \\
\leq \frac{\sum_{m=1}^{n-k} (m^{\alpha_{j}} (k \alpha_{l} (m+k)^{\alpha_{l}-1})) L(m) L'(m+k)}{\sqrt{\sum_{i=1}^{n} i^{2 \alpha_{j}} L(i)^{2}} \sqrt{\sum_{q=1}^{n} q^{2 \alpha_{l}} L'(q)^{2}}} \\
\leq \frac{(k \alpha_{l})\sum_{m=1}^{n-k} m^{\alpha_{j}} (m(1+ \frac{k}{m}))^{\alpha_{l}-1} L(m) L'(m+k)}{\sqrt{\sum_{i=1}^{n} i^{2 \alpha_{j}} L(i)^{2}} \sqrt{\sum_{q=1}^{n} q^{2 \alpha_{l}} L'(q)^{2}}},
\end{multline*}
and because $\frac{k}{m}$ is smaller or equal to $k$:
\begin{multline*}
\frac{(k \alpha_{l})\sum_{m=1}^{n-k} m^{\alpha_{j}} (m(1+ \frac{k}{m}))^{\alpha_{l}-1} L(m) L'(m+k)}{\sqrt{\sum_{i=1}^{n} i^{2 \alpha_{j}} L(i)^{2}} \sqrt{\sum_{q=1}^{n} q^{2 \alpha_{l}} L'(q)^{2}}} \\
\leq \frac{(k \alpha_{l})\sum_{m=1}^{n-k} m^{\alpha_{j}} m^{\alpha_{l}-1} (1+ k)^{\alpha_{l}-1} L(m) L'(m+k)}{\sqrt{\sum_{i=1}^{n} i^{2 \alpha_{j}} L(i)^{2}} \sqrt{\sum_{q=1}^{n} q^{2 \alpha_{l}} L'(q)^{2}}} \\
\leq \frac{(k \alpha_{l}) (1+ k)^{\alpha_{l}-1} \sum_{m=1}^{n} m^{\alpha_{j} + \alpha_{l} - 1} L(m) L'(m+k)}{\sqrt{\sum_{i=1}^{n} i^{2\alpha_{j}} L(i)^{2}} \sqrt{\sum_{q=1}^{n} q^{2 \alpha_{l}} L'(q)^{2}}}.
\end{multline*}
Using again the proposition of Pipiras and Taqqu, we have:
\begin{multline*}
\frac{(k \alpha_{l}) (1+ k)^{\alpha_{l}-1} \sum_{m=1}^{n} m^{\alpha_{j} + \alpha_{l} - 1} L(m) L'(m+k)}{\sqrt{\sum_{i=1}^{n} i^{2 \alpha_{j}} L(i)^{2}} \sqrt{\sum_{q=1}^{n} q^{2 \alpha_{l}} L'(q)^{2}}} \\
\sim \frac{(k \alpha_{l}) (1+ k)^{\alpha_{l}-1} \frac{n^{\alpha_{j} + \alpha_{l}}}{\alpha_{j}+\alpha_{l}} L(n) L'(n+k)}{\sqrt{\frac{n^{2 \alpha_{j}+1}}{2 \alpha_{j}+1} L(n)^{2}} \sqrt{\frac{n^{2 \alpha_{l}+1}}{2 \alpha_{l}+1} L'(n)^{2}}} \\
\sim \frac{\sqrt{2 \alpha_{j}+1} \sqrt{2 \alpha_{l}+1} (k \alpha_{l}) (1+ k)^{\alpha_{l}-1}}{\alpha_{j}+\alpha_{l}} \frac{1}{n} \frac{L'(n+k)}{L'(n)},
\end{multline*}
and this quantity tends to $0$ as $n$ tends to infinity.

With the same idea, if $0 < \alpha_{l} < 1$ and again for the first term on the right-hand side in~\eqref{9bis}, we have:
\begin{multline*}
\frac{\sum_{m=1}^{n-k} m^{\alpha_{j}} ((m+k)^{\alpha_{l}} - m^{\alpha_{l}}) L(m) L'(m+k)}{\sqrt{\sum_{i=1}^{n} i^{2 \alpha_{j}} L(i)^{2}} \sqrt{\sum_{q=1}^{n} q^{2 \alpha_{l}} L'(q)^{2}}} \\
\leq \frac{\sum_{m=1}^{n-k} (m^{\alpha_{j}} (k \alpha_{l} m^{\alpha_{l}-1})) L(m) L'(m+k)}{\sqrt{\sum_{i=1}^{n} i^{2 \alpha_{j}} L(i)^{2}} \sqrt{\sum_{q=1}^{n} q^{2 \alpha_{l}} L'(q)^{2}}} \\
\leq \frac{(k \alpha_{l}) \sum_{m=1}^{n} m^{\alpha_{j} + \alpha_{l} - 1} L(m) L'(m+k)}{\sqrt{\sum_{i=1}^{n} i^{2 \alpha_{j}} L(i)^{2}} \sqrt{\sum_{q=1}^{n} q^{2 \alpha_{l}} L'(q)^{2}}} .
\end{multline*}
If $\alpha_{j}+\alpha_{l} > 0$, we can use the equivalence of Pipiras and Taqqu and show that it converges to $0$:
\begin{eqnarray*}
\frac{(k \alpha_{l}) \sum_{m=1}^{n} m^{\alpha_{j} + \alpha_{l} - 1} L(m) L'(m+k)}{\sqrt{\sum_{i=1}^{n} i^{2 \alpha_{j}} L(i)^{2}} \sqrt{\sum_{q=1}^{n} q^{2 \alpha_{l}} L'(q)^{2}}} 
&\sim & \frac{(k \alpha_{l}) \sqrt{2 \alpha_{j}+1} \sqrt{2\alpha_{l}+1}}{\alpha_{j}+\alpha_{l}} \frac{1}{n} \frac{L'(n+k)}{L'(n)}.
\end{eqnarray*}
If $\alpha_{j}+\alpha_{l} < 0$, the quantity converges to $0$, because the numerator is summable and the denominator tends to infinity. Furthermore, if $\alpha_{j}+\alpha_{l} = 0$, the quantity converges to $0$ too.

Finally, if $-\frac{1}{2} < \alpha_{l} < 0$, we have:
\begin{eqnarray*}
&\phantom{=}&
\frac{\sum_{m=1}^{n-k} (m^{\alpha_{j}} ((m+k)^{\alpha_{l}} - m^{\alpha_{l}})) L(m) L'(m+k)}{\sqrt{\sum_{i=1}^{n} i^{2 \alpha_{j}} L(i)^{2}} \sqrt{\sum_{q=1}^{n} q^{2 \alpha_{l}} L'(q)^{2}}} \\
&\leq & \frac{\sum_{m=1}^{n-k} (m^{\alpha_{j}} \left| (m+k)^{\alpha_{l}} - m^{\alpha_{l}} \right| ) L(m) L'(m+k)}{\sqrt{\sum_{i=1}^{n} i^{2 \alpha_{j}} L(i)^{2}} \sqrt{\sum_{q=1}^{n} q^{2 \alpha_{l}} L'(q)^{2}}} \\
&\leq & \frac{\sum_{m=1}^{n-k} (m^{\alpha_{j}} (k | \alpha_{l} | m^{\alpha_{l}-1})) L(m) L'(m+k)}{\sqrt{\sum_{i=1}^{n} i^{2 \alpha_{j}} L(i)^{2}} \sqrt{\sum_{q=1}^{n} q^{2 \alpha_{l}} L'(q)^{2}}} \\
&\leq & \frac{(k | \alpha_{l} |) \sum_{m=1}^{n} m^{\alpha_{j}+\alpha_{l}-1} L(m) L'(m+k)}{\sqrt{\sum_{i=1}^{n} i^{2 \alpha_{j}} L(i)^{2}} \sqrt{\sum_{q=1}^{n} q^{2 \alpha_{l}} L'(q)^{2}}},
\end{eqnarray*}
and we get the same results as above.\\

For the second term on the right-hand side in~\eqref{9bis}, we use again the proposition of Pipiras and Taqqu:
\begin{eqnarray*}
\frac{\sum_{m=1}^{n-k} m^{\alpha_{j}+\alpha_{l}} L(m) L'(m+k)}{\sqrt{\sum_{i=1}^{n} i^{2 \alpha_{j}} L(i)^{2}} \sqrt{\sum_{q=1}^{n} q^{2 \alpha_{l}} L'(q)^{2}}}
&\sim & \frac{\frac{(n-k)^{\alpha_{j}+\alpha_{l}+1}}{\alpha_{j}+\alpha_{l}+1} L(n-k) L'(n)}{\sqrt{\frac{n^{2\alpha_{j}+1}}{2 \alpha_{j}+1} L(n)^{2}} \sqrt{\frac{n^{2 \alpha_{l}+1}}{2 \alpha_{l}+1} L'(n)^{2}}} \\
&\sim & \frac{\sqrt{2 \alpha_{j}+1} \sqrt{2\alpha_{l}+1}}{\alpha_{j}+\alpha_{l}+1} \frac{(n-k)^{\alpha_{j}+\alpha_{l}+1}}{n^{\alpha_{j}+1/2} n^{\alpha_{l}+1/2}} \frac{L(n-k)}{L(n)},
\end{eqnarray*}
and this quantity converges to $\frac{\sqrt{2\alpha_{j}+1} \sqrt{2 \alpha_{l}+1}}{\alpha_{j}+\alpha_{l}+1}$. 

Thereby the coefficients $\rho_{j,l}(k)$ are constants and equal to $\frac{\sqrt{2\alpha_{j}+1} \sqrt{2\alpha_{l}+1}}{\alpha_{j}+\alpha_{l}+1}$.

\end{proof}

\subsection{Theorem~\ref{50}}

\begin{proof}
The proof of Theorem~\ref{50} is split in two parts. Indeed, notice that:
\[\left \| f_{n}^{\ast}(\lambda) - f(\lambda) \right \|_{\mathbb{L}^{1}} \leq \left \| f_{n}^{\ast}(\lambda) - f_{n}(\lambda) \right \|_{\mathbb{L}^{1}} + \left \| f_{n}(\lambda) - f(\lambda) \right \|_{\mathbb{L}^{1}}.\]
The proof is complete with Propositions~\ref{200bis} and~\ref{201bis}:

\begin{prop}
Under the assumptions of Theorem~\ref{50}, we have:
\begin{equation}
\lim_{n \rightarrow \infty} \sup_{\lambda \in [-\pi,\pi]} \left \| f_{n}(\lambda) - f(\lambda) \right \|_{\mathbb{L}^{1}} = 0.
\label{200}
\end{equation}
\label{200bis}
\end{prop}

\begin{prop}
Under the assumptions of Theorem~\ref{50}, we have:
\begin{equation}
\lim_{n \rightarrow \infty} \sup_{\lambda \in [-\pi,\pi]} \left \| f_{n}^{\ast}(\lambda) - f_{n}(\lambda) \right \|_{\mathbb{L}^{1}} = 0.
\label{201}
\end{equation}
\label{201bis}
\end{prop}

\end{proof}

\subsubsection{Proposition~\ref{200bis}}

\begin{proof}
Without loss of generality, $c_{n}$ is chosen such that $2c_{n} \leq n-1$.
Let $m$ be an integer such that: $1 \leq 2m \leq 2c_{n} \leq n-1$. For all $i \in \mathbb{Z}$, define: 
\begin{equation}
\tilde{\epsilon}_{i,m} = \mathbb{E}(\epsilon_{i} | \mathcal{F}_{i+m}) - \mathbb{E}(\epsilon_{i} | \mathcal{F}_{i-m}),
\label{202}
\end{equation}
and notice that $\mathbb{E}(\tilde{\epsilon}_{i,m}) = 0$. The associated spectral density estimate is defined as follows:
\[\tilde{f}_{n}^{m}(\lambda) = \frac{1}{2\pi} \sum_{|k| \leq n-1} K \left(\frac{|k|}{c_{n}} \right) \hat{\tilde{\gamma}}_{k,m} e^{i k \lambda}\text{,} \ \quad \lambda \in [-\pi,\pi],\]
where : 
\[\hat{\tilde{\gamma}}_{k,m} = \frac{1}{n} \sum_{j=1}^{n-|k|} \tilde{\epsilon}_{j,m} \tilde{\epsilon}_{j+|k|,m}, \ \quad |k| \leq n-1.\]

By the triangle inequality, it follows that:
\begin{eqnarray*}
\left \| f_{n} \left( \lambda \right) - f \left( \lambda \right) \right \|_{\mathbb{L}^{1}} 
&\leq & \left \| f_{n}(\lambda) - \tilde{f}_{n}^{m}(\lambda) \right \|_{\mathbb{L}^{1}} + \left \| \tilde{f}_{n}^{m}(\lambda) - \mathbb{E}(\tilde{f}_{n}^{m}(\lambda)) \right \|_{\mathbb{L}^{1}} \\
&& +\: \left | \mathbb{E}(\tilde{f}_{n}^{m}(\lambda)) - \mathbb{E}(f_{n}(\lambda)) \right | + \left \| \mathbb{E}(f_{n}(\lambda)) - f(\lambda) \right \|_{\mathbb{L}^{1}} \\
&\leq & 2\left \| \tilde{f}_{n}^{m}(\lambda) - f_{n}(\lambda) \right \|_{\mathbb{L}^{1}} + \left \| \tilde{f}_{n}^{m}(\lambda) - \mathbb{E}(\tilde{f}_{n}^{m}(\lambda)) \right \|_{\mathbb{L}^{1}} + \left \| \mathbb{E}(f_{n}(\lambda)) - f(\lambda) \right \|_{\mathbb{L}^{1}},
\end{eqnarray*}
because $\left | \mathbb{E}(\tilde{f}_{n}^{m}(\lambda)) - \mathbb{E}(f_{n}(\lambda)) \right | \leq \left \| \tilde{f}_{n}^{m}(\lambda) - f_{n}(\lambda) \right \|_{\mathbb{L}^{1}}$.\\

The proof is complete using Lemmas~\ref{205bis},~\ref{206bis} and~\ref{207bis}:

\begin{lem}
Under the assumptions of Theorem~\ref{50}, we have:
\begin{equation}
\lim_{n \rightarrow \infty} \sup_{\lambda \in [-\pi,\pi]} \left \| \mathbb{E}(f_{n}(\lambda)) - f(\lambda) \right \|_{\mathbb{L}^{1}} = 0.
\label{205}
\end{equation}
\label{205bis}
\end{lem}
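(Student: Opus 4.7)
Since $\mathbb{E}(f_{n}(\lambda))$ is deterministic, the $\mathbb{L}^{1}$-norm reduces to the absolute value, and the goal becomes showing that $\sup_{\lambda} |\mathbb{E}(f_{n}(\lambda)) - f(\lambda)| \to 0$. The plan is therefore to compute $\mathbb{E}(f_{n}(\lambda))$ explicitly, subtract the Fourier series of $f$, and bound the resulting quantity by an expression that does not depend on $\lambda$.

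First, by stationarity, $\mathbb{E}(\hat{\gamma}_{k}) = (1 - |k|/n)\,\gamma(k)$ for $|k| \leq n-1$. Combining this with the definition~\eqref{17} of $f_{n}$ and the Fourier expansion $f(\lambda) = \frac{1}{2\pi}\sum_{k \in \mathbb{Z}} \gamma(k) e^{i k \lambda}$, I obtain
\[
\mathbb{E}(f_{n}(\lambda)) - f(\lambda) = \frac{1}{2\pi}\sum_{|k| \leq n-1} \left[ K\!\left(\tfrac{|k|}{c_{n}}\right)\!\left(1 - \tfrac{|k|}{n}\right) - 1 \right] \gamma(k)\, e^{i k \lambda} \;-\; \frac{1}{2\pi}\sum_{|k| \geq n} \gamma(k)\, e^{i k \lambda}.
\]
Taking absolute values and bounding $|e^{i k \lambda}| = 1$ yields a bound independent of $\lambda$, namely
\[
|\mathbb{E}(f_{n}(\lambda)) - f(\lambda)| \leq \frac{1}{2\pi} \sum_{|k| \leq n-1} \left| K\!\left(\tfrac{|k|}{c_{n}}\right)\!\left(1 - \tfrac{|k|}{n}\right) - 1 \right| |\gamma(k)| + \frac{1}{2\pi}\sum_{|k| \geq n} |\gamma(k)|.
\]

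Next, I would split the first sum according to the three pieces of the kernel $K$. For $|k| \leq c_{n}$, $K(|k|/c_{n}) = 1$, so the bracket reduces to $|k|/n \leq c_{n}/n$. For $c_{n} < |k| \leq 2c_{n}$ the factor $K(|k|/c_{n})(1-|k|/n)$ lies in $[0,1]$, so the bracket is at most $1$. For $2 c_{n} < |k| \leq n-1$ the kernel vanishes and the bracket equals $1$. Hence the first sum is dominated by
\[
\frac{c_{n}}{n}\sum_{k \in \mathbb{Z}} |\gamma(k)| + \sum_{|k| > c_{n}} |\gamma(k)|.
\]
By~\eqref{0bis}, which follows from Hannan's condition, the series $\sum_{k} |\gamma(k)|$ is finite; using $c_{n}/n \to 0$ together with $c_{n} \to \infty$, both terms above tend to $0$. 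The remainder $\sum_{|k| \geq n}|\gamma(k)|$ also tends to $0$ by absolute summability of $\gamma$.

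There is no genuine obstacle in this argument: the computation is a variant of the classical bias-bound for lag-window spectral estimators, and the only input beyond routine manipulation is the summability $\sum_{k}|\gamma(k)| < \infty$, which is already provided by Hannan's hypothesis. The essential point is that the $\lambda$-dependence is confined to unit-modulus exponentials, which makes the convergence automatically uniform on $[-\pi,\pi]$.
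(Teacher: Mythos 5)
Your proposal is correct and follows essentially the same route as the paper: compute $\mathbb{E}(f_{n}(\lambda))$ by stationarity, getting the factor $\left(1-\frac{|k|}{n}\right)K\left(\frac{|k|}{c_{n}}\right)$ in front of $\gamma(k)$, and conclude via the summability $\sum_{k}|\gamma(k)|<\infty$ from~\eqref{0bis}, with uniformity in $\lambda$ free because $|e^{ik\lambda}|=1$. The only difference is cosmetic: where the paper invokes the dominated convergence theorem in one line, you make the same estimate explicit by splitting the sum over the three regimes of the kernel, which is a valid (and slightly more quantitative) way of carrying out the identical argument.
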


\begin{lem}
Under the assumptions of Theorem~\ref{50}, we have:
\begin{equation}
\lim_{m \rightarrow \infty} \limsup_{n \rightarrow \infty} \sup_{\lambda \in [-\pi,\pi]} \left \| \tilde{f}_{n}^{m}(\lambda) - f_{n}(\lambda) \right \|_{\mathbb{L}^{1}} = 0.
\label{206}
\end{equation}
\label{206bis}
\end{lem}

\begin{lem}
Under the assumptions of Theorem~\ref{50}, we have:
\begin{equation}
\lim_{m \rightarrow \infty} \limsup_{n \rightarrow \infty} \sup_{\lambda \in [-\pi,\pi]} \left \| \tilde{f}_{n}^{m}(\lambda) - \mathbb{E}(\tilde{f}_{n}^{m}(\lambda)) \right \|_{\mathbb{L}^{1}} = 0.
\label{207}
\end{equation}
\label{207bis}
\end{lem}

\end{proof}

\begin{proof}[\textbf{Proof of Lemma~\ref{205bis}}]

By the properties of expectation and by stationarity:
\[\mathbb{E} \left( f_{n}(\lambda) \right) = \frac{1}{2\pi} \sum_{|k| \leq n-1} K \left( \frac{|k|}{c_{n}} \right) \mathbb{E} (\hat{\gamma}_{k}) e^{i k \lambda} = \frac{1}{2\pi} \sum_{|k| \leq n-1} \left( \frac{n-|k|}{n} \right) K \left( \frac{|k|}{c_{n}} \right) \gamma_{k} e^{i k \lambda}.\]
Since $c_{n} \xrightarrow[n \rightarrow \infty]{} \infty$ and $\lim_{u \rightarrow 0} K(u) = 1$, thanks to dominated convergence theorem and because $\sum_{k} | \gamma(k) | < + \infty$, it is clear that~\eqref{205} is true.

\end{proof}

\begin{proof}[\textbf{Proof of Lemma~\ref{206bis}}]

Let $S_{n}$ and $\tilde{S}_{n}^{m}$ be defined as:
\[S_{n}(\lambda) = \sum_{k=1}^{n} \epsilon_{k} e^{i k \lambda}\]
\[\tilde{S}_{n}^{m}(\lambda) = \sum_{k=1}^{n} \tilde{\epsilon}_{k,m} e^{i k \lambda}.\]

Because $(a+b)^{2} \leq 2a^{2} + 2b^{2}$, we have:
\begin{eqnarray*}
\frac{1}{n} \left \| S_{n}(\lambda) - \tilde{S}_{n}^{m}(\lambda) \right \|_{\mathbb{L}^{2}}^{2} 
&=& \frac{1}{n} \left \| \sum_{k=1}^{n} \epsilon_{k} e^{i k \lambda} - \sum_{k=1}^{n} \tilde{\epsilon}_{k,m} e^{i k \lambda} \right \|_{\mathbb{L}^{2}}^{2} \\
&=& \frac{1}{n} \left \| \sum_{k=1}^{n} \epsilon_{k} e^{i k \lambda} - \left( \sum_{k=1}^{n} \mathbb{E}(\epsilon_{k} | \mathcal{F}_{k+m}) e^{i k \lambda} - \mathbb{E}(\epsilon_{k} | \mathcal{F}_{k-m}) e^{i k \lambda} \right) \right \|_{\mathbb{L}^{2}}^{2} \\
&=& \frac{1}{n} \left \| \sum_{k=1}^{n} (\epsilon_{k} - \mathbb{E}(\epsilon_{k} | \mathcal{F}_{k+m})) e^{i k \lambda} + \sum_{k=1}^{n} \mathbb{E}(\epsilon_{k} | \mathcal{F}_{k-m}) e^{i k \lambda} \right \|_{\mathbb{L}^{2}}^{2} \\
&\leq & \frac{2}{n} \left \| \sum_{k=1}^{n} (\epsilon_{k} - \mathbb{E}(\epsilon_{k} | \mathcal{F}_{k+m})) e^{i k \lambda} \right \|_{\mathbb{L}^{2}}^{2} + \frac{2}{n} \left \| \sum_{k=1}^{n} \mathbb{E}(\epsilon_{k} | \mathcal{F}_{k-m}) e^{i k \lambda} \right \|_{\mathbb{L}^{2}}^{2}.
\end{eqnarray*}
We get for the first term of the right-hand side:
\begin{eqnarray*}
\frac{1}{n} \left \| \sum_{k=1}^{n} (\epsilon_{k} - \mathbb{E}(\epsilon_{k} | \mathcal{F}_{k+m})) e^{i k \lambda} \right \|_{\mathbb{L}^{2}}^{2}
&=& \frac{1}{n} \left \| \sum_{k=1}^{n} \sum_{j=k+m+1}^{\infty} P_{j}(\epsilon_{k}) e^{i k \lambda} \right \|_{\mathbb{L}^{2}}^{2} \\
&=& \frac{1}{n} \left \| \sum_{j=m+2}^{\infty} \sum_{k=1}^{n} P_{j}(\epsilon_{k}) e^{i k \lambda} \textbf{1}_{\{j \geq k+m+1\}} \right \|_{\mathbb{L}^{2}}^{2} \\ 
&=& \frac{1}{n} \sum_{j=m+2}^{\infty} \left \| \sum_{k=1}^{n} P_{j}(\epsilon_{k}) e^{i k \lambda} \textbf{1}_{\{k-j \leq -(m+1)\}} \right \|_{\mathbb{L}^{2}}^{2} \\
&\leq & \frac{1}{n} \sum_{j=m+2}^{\infty} \left( \sum_{k=1}^{n} \left \| P_{j}(\epsilon_{k}) \right \|_{\mathbb{L}^{2}} \textbf{1}_{\{k-j \leq -(m+1)\}} \right)^{2}, \\
\end{eqnarray*}
using the Pythagoras' theorem and the triangle inequality. It follows: 
\begin{eqnarray}
\frac{1}{n} \sum_{j=m+2}^{\infty} \left( \sum_{k=1}^{n} \left \| P_{j}(\epsilon_{k}) \right \|_{\mathbb{L}^{2}} \textbf{1}_{\{k-j \leq -(m+1)\}} \right)^{2}
&\leq & \frac{1}{n} \sum_{j=m+2}^{\infty} \left( \sum_{k=1}^{n} \left \| P_{0}(\epsilon_{k-j}) \right \|_{\mathbb{L}^{2}} \textbf{1}_{\{k-j \leq -(m+1)\}} \right)^{2} \notag \\
&\leq & \frac{1}{n} \sum_{j=m+2}^{\infty} \left( \sum_{r=-\infty}^{-(m+1)} \left \| P_{0}(\epsilon_{r}) \right \|_{\mathbb{L}^{2}} \textbf{1}_{\{1-j \leq r \leq n-j\}} \right)^{2} \notag \\
&\leq & \frac{1}{n} \sum_{j=m+2}^{\infty} \left( \textbf{1}_{\{1-r \leq j \leq n-r\}} \sum_{r=-\infty}^{-(m+1)} \left \| P_{0}(\epsilon_{r}) \right \|_{\mathbb{L}^{2}} \right)^{2} \notag \\
&\leq & \left( \sum_{r=-\infty}^{-(m+1)} \left \| P_{0}(\epsilon_{r}) \right \|_{\mathbb{L}^{2}} \right)^{2}. \label{210}
\end{eqnarray}

With the same arguments, the second term of the right-hand side satisfies the inequality:
\begin{equation}
\frac{1}{n} \left \| \sum_{k=1}^{n} \mathbb{E}(\epsilon_{k} | \mathcal{F}_{k-m}) e^{i k \lambda} \right \|_{\mathbb{L}^{2}}^{2} \leq \left( \sum_{r=m}^{\infty} \left \| P_{0}(\epsilon_{r}) \right \|_{\mathbb{L}^{2}} \right)^{2}.
\label{211}
\end{equation}

Consequently, combining~\eqref{210} and~\eqref{211}, we obtain that:
\[\sup_{\lambda \in [-\pi,\pi]} \frac{1}{n} \left \| S_{n}(\lambda) - \tilde{S}_{n}^{m}(\lambda) \right \|_{\mathbb{L}^{2}}^{2} \leq 2 \left( \sum_{r=-\infty}^{-(m+1)} \left \| P_{0}(\epsilon_{r}) \right \|_{\mathbb{L}^{2}} \right)^{2} + 2 \left( \sum_{r=m}^{\infty} \left \| P_{0}(\epsilon_{r}) \right \|_{\mathbb{L}^{2}} \right)^{2}.\]
Then, since $\sum_{i=-\infty}^{\infty} \left \| P_{0}(\epsilon_{i}) \right \|_{\mathbb{L}^{2}} < +\infty$, we have this first result:
\begin{equation}
\lim_{m \rightarrow \infty} \limsup_{n \rightarrow \infty} \sup_{\lambda \in [-\pi,\pi]} \frac{1}{n} \left \| S_{n}(\lambda) - \tilde{S}_{n}^{m}(\lambda) \right \|_{\mathbb{L}^{2}}^{2} = 0.\\
\label{212}
\end{equation}

Define now the two periodograms corresponding to the quantities $S_{n}$ and $\tilde{S}_{n}^{m}$: 

\[I_{n}(\lambda) = \frac{1}{2 \pi n} \left| S_{n}(\lambda) \right|^{2} = \frac{1}{2\pi} \sum_{k=1-n}^{n-1} \hat{\gamma}_{k} e^{i k \lambda}\]
\[\tilde{I}_{n}^{m}(\lambda) = \frac{1}{2 \pi n} \left| \tilde{S}_{n}^{m}(\lambda) \right|^{2} = \frac{1}{2\pi} \sum_{k=1-n}^{n-1} \hat{\tilde{\gamma}}_{k,m} e^{i k \lambda}.\]
By the Cauchy-Schwarz inequality and the triangle inequality:
\begin{eqnarray*}
\left \| I_{n}(\lambda) - \tilde{I}_{n}^{m}(\lambda) \right \|_{\mathbb{L}^{1}}
&=& \left \| \frac{1}{2 \pi n} \left| S_{n}(\lambda) \right|^{2} - \frac{1}{2 \pi n} \left| \tilde{S}_{n}^{m}(\lambda) \right|^{2} \right \|_{\mathbb{L}^{1}} \\
&=& \frac{1}{2 \pi n} \left \| \left| S_{n}(\lambda) \right|^{2} - \left| \tilde{S}_{n}^{m}(\lambda) \right|^{2} \right \|_{\mathbb{L}^{1}} \\
&=& \frac{1}{2 \pi n} \left \| \left( \left| S_{n}(\lambda) \right| - \left| \tilde{S}_{n}^{m}(\lambda) \right| \right) \left( \left| S_{n}(\lambda) \right| + \left| \tilde{S}_{n}^{m}(\lambda) \right | \right|) \right \|_{\mathbb{L}^{1}} \\
&\leq & \frac{1}{2 \pi n} \left \| \left| S_{n}(\lambda) \right| - \left| \tilde{S}_{n}^{m}(\lambda) \right| \right \|_{\mathbb{L}^{2}} \left \| \left| S_{n}(\lambda) \right| + \left| \tilde{S}_{n}^{m}(\lambda) \right | \right \|_{\mathbb{L}^{2}} \\
&\leq & \frac{1}{2 \pi } \frac{1}{\sqrt{n}} \left \| S_{n}(\lambda) - \tilde{S}_{n}^{m}(\lambda) \right \|_{\mathbb{L}^{2}} \left( \frac{\left \| S_{n}(\lambda) \right \|_{\mathbb{L}^{2}}}{\sqrt{n}} + \frac{ \left \| \tilde{S}_{n}^{m}(\lambda)  \right \|_{\mathbb{L}^{2}}}{\sqrt{n}} \right). \\
\end{eqnarray*}
Thus, thanks to~\eqref{212} and the following inequality for $S_{n}$ and $\tilde{S}_{n}^{m}$:
\begin{equation}
\frac{1}{\sqrt{n}} \left \| S_{n}(\lambda) \right \|_{\mathbb{L}^{2}} \leq \sum_{k \in \mathbb{Z}} \left \| P_{0}(\epsilon_{k}) \right \|_{\mathbb{L}^{2}} < \infty,
\label{215}
\end{equation}
we get:
\begin{equation}
\lim_{m \rightarrow \infty} \limsup_{n \rightarrow \infty} \sup_{\lambda \in [-\pi,\pi]} \left \| I_{n}(\lambda) - \tilde{I}_{n}^{m}(\lambda) \right \|_{\mathbb{L}^{1}} = 0.\\
\label{216}
\end{equation}

Then, let $\hat{K}(.)$ be the Fourier transform of $K$:
\begin{eqnarray*}
f_{n}(\lambda) - \tilde{f}_{n}^{m}(\lambda)
&=& \frac{1}{2\pi} \sum_{|k| \leq n-1} K \left( \frac{|k|}{c_{n}} \right) e^{i k \lambda} \left( \hat{\gamma}_{k} - \hat{\tilde{\gamma}}_{k,m} \right) \\
&=& \frac{1}{2\pi} \sum_{|k| \leq n-1} \frac{1}{2\pi} \left( \int_{\mathbb{R}} \hat{K}(u) e^{i u \frac{k}{c_{n}}} du \right) e^{i k \lambda}  \left( \hat{\gamma}_{k} - \hat{\tilde{\gamma}}_{k,m} \right) \\ 
&=& \frac{1}{2\pi} \int_{\mathbb{R}} \hat{K}(u) \frac{1}{2\pi} \sum_{|k| \leq n-1} \left( \hat{\gamma}_{k} - \hat{\tilde{\gamma}}_{k} \right) e^{i k (\frac{u}{c_{n}} + \lambda)} du \\
&=& \frac{1}{2\pi} \int_{\mathbb{R}} \hat{K}(u) \left( I_{n} \left( \frac{u}{c_{n}} + \lambda \right) - \tilde{I}_{n}^{m} \left( \frac{u}{c_{n}} + \lambda \right) \right) du, \\
\end{eqnarray*}
using the definition of $I_{n}$ and $\tilde{I}_{n}^{m}$. Hence, by the triangle inequality:
\begin{eqnarray*}
\left \| f_{n}(\lambda) - \tilde{f}_{n}^{m}(\lambda) \right \|_{\mathbb{L}^{1}}
&=& \left \| \frac{1}{2\pi} \int_{\mathbb{R}} \hat{K}(u) \left( I_{n} \left( \frac{u}{c_{n}} + \lambda \right) - \tilde{I}_{n}^{m} \left( \frac{u}{c_{n}} + \lambda \right) \right) du \right \|_{\mathbb{L}^{1}} \\
&\leq & \frac{1}{2\pi} \int_{\mathbb{R}} \left| \hat{K}(u) \right| \left \| \left( I_{n} \left( \frac{u}{c_{n}} + \lambda \right) - \tilde{I}_{n}^{m}\left( \frac{u}{c_{n}} + \lambda \right) \right) \right \|_{\mathbb{L}^{1}} du \\
&\leq & \frac{1}{2\pi} \sup_{\theta} \left \| I_{n}(\theta) - \tilde{I}_{n}^{m}(\theta) \right \|_{\mathbb{L}^{1}} \int_{\mathbb{R}} \left| \hat{K}(u) \right| du. \\
\end{eqnarray*}
Using~\eqref{216} and the fact that $\hat{K}$ is integrable, Lemma~\ref{206bis} is proved.

\end{proof}

\begin{proof}[\textbf{Proof of Lemma~\ref{207bis}}]

Without loss of generality, suppose $\theta=0$. We have:
\begin{eqnarray*}
\tilde{f}_{n}^{m}(0) 
&=& \frac{1}{2\pi} \sum_{|k| \leq n-1} K \left( \frac{|k|}{c_{n}} \right) \hat{\tilde{\gamma}}_{k,m} \\
&=& \frac{2}{2\pi} \sum_{k=1}^{n-1} K \left( \frac{k}{c_{n}} \right) \hat{\tilde{\gamma}}_{k,m} + \frac{1}{2 \pi} \hat{\tilde{\gamma}}_{0,m} \\
&=& \frac{2}{2\pi} \sum_{k=1}^{n-1} K \left( \frac{k}{c_{n}} \right) \frac{1}{n} \sum_{j=1}^{n-k} \tilde{\epsilon}_{j,m} \tilde{\epsilon}_{j+k,m}+ \frac{1}{2 \pi n} \sum_{j=1}^{n} \tilde{\epsilon}_{j,m}^{2}.
\end{eqnarray*}

By the triangle inequality again and a change of variables, we have:
\begin{eqnarray*}
&\phantom{=}&
\left \| \tilde{f}_{n}^{m}(0) - \mathbb{E} \left( \tilde{f}_{n}^{m}(0) \right) \right \|_{\mathbb{L}^{1}} \\
&=& \left \| \frac{2}{2\pi} \sum_{k=1}^{n-1} K \left( \frac{k}{c_{n}} \right) \frac{1}{n} \sum_{j=1}^{n-k} \tilde{\epsilon}_{j,m} \tilde{\epsilon}_{j+k,m} -\mathbb{E}(\tilde{\epsilon}_{j,m} \tilde{\epsilon}_{j+k,m}) + \frac{1}{2 \pi n} \sum_{j=1}^{n} \tilde{\epsilon}_{j,m}^{2} - \mathbb{E}(\tilde{\epsilon}_{j,m}^{2}) \right \|_{\mathbb{L}^{1}} \\
&\leq & \frac{2}{2\pi} \left \| \sum_{k=1}^{n-1} K \left( \frac{k}{c_{n}} \right) \frac{1}{n} \sum_{j=1}^{n-k} \left( \tilde{\epsilon}_{j,m} \tilde{\epsilon}_{j+k,m} - \mathbb{E}(\tilde{\epsilon}_{j,m} \tilde{\epsilon}_{j+k,m}) \right) \right \|_{\mathbb{L}^{1}} \\
&& +\: \frac{1}{2\pi} \left \| \frac{1}{n} \sum_{i=1}^{n} \tilde{\epsilon}_{i,m}^{2} - \mathbb{E}(\tilde{\epsilon}_{0,m}^{2}) \right \|_{\mathbb{L}^{1}} \\
&\leq & \frac{2}{2\pi} \left \| \frac{1}{n} \sum_{i=2}^{n} \sum_{j=(i-2c_{n}) \vee 1}^{i-1} K \left( \frac{i-j}{c_{n}} \right) (\tilde{\epsilon}_{i,m} \tilde{\epsilon}_{j,m} - \mathbb{E}(\tilde{\epsilon}_{i,m} \tilde{\epsilon}_{j,m})) \right \|_{\mathbb{L}^{1}} \\
&& +\: \frac{1}{2\pi} \left \| \frac{1}{n} \sum_{i=1}^{n} \tilde{\epsilon}_{i,m}^{2} - \mathbb{E}(\tilde{\epsilon}_{0,m}^{2}) \right \|_{\mathbb{L}^{1}} .
\end{eqnarray*}
By the $\mathbb{L}^{1}$-ergodic theorem, it is known that, at $m$ fixed:
\[\lim_{n \rightarrow \infty} \left \| \frac{1}{n} \sum_{i=1}^{n} \tilde{\epsilon}_{i,m}^{2} - \mathbb{E}(\tilde{\epsilon}_{0,m}^{2}) \right \|_{\mathbb{L}^{1}} = 0.\]
Consequently, it remains to prove:
\[\lim_{m \rightarrow \infty} \limsup_{n \rightarrow \infty} \left \| \frac{1}{n} \sum_{i=2}^{n} \sum_{j=(i-2c_{n}) \vee 1}^{i-1} K \left( \frac{i-j}{c_{n}} \right) (\tilde{\epsilon}_{i,m} \tilde{\epsilon}_{j,m} - \mathbb{E}(\tilde{\epsilon}_{i,m} \tilde{\epsilon}_{j,m})) \right \|_{\mathbb{L}^{1}} = 0.\]

We know that:
\begin{equation}
\frac{1}{n} \sum_{i=2m+1}^{n} \sum_{j=(i-2c_{n}) \vee 1}^{i-2m} K \left( \frac{i-j}{c_{n}} \right) \mathbb{E}(\tilde{\epsilon}_{i,m} \tilde{\epsilon}_{j,m}) = 0.
\label{-100}
\end{equation}
Indeed, \[\mathbb{E}(\tilde{\epsilon}_{i,m} \tilde{\epsilon}_{j,m}) = \mathbb{E} \left( \left( \mathbb{E}(\epsilon_{j} | \mathcal{F}_{j+m}) - \mathbb{E}(\epsilon_{j} | \mathcal{F}_{j-m}) \right) \left( \mathbb{E}(\epsilon_{i} | \mathcal{F}_{i+m}) - \mathbb{E}(\epsilon_{i} | \mathcal{F}_{i-m}) \right) \right).\]
But $\mathbb{E}(\epsilon_{i} | \mathcal{F}_{i+m}) - \mathbb{E}(\epsilon_{i} | \mathcal{F}_{i-m})$ is orthogonal to $\mathbb{L}^{2}(\mathcal{F}_{i-m})$, and $\mathbb{E}(\epsilon_{j} | \mathcal{F}_{j+m}) - \mathbb{E}(\epsilon_{j} | \mathcal{F}_{j-m})$ belongs to $\mathbb{L}^{2}(\mathcal{F}_{i-m})$ if $j+m \leq i-m$. Thus $\mathbb{E}(\tilde{\epsilon}_{i,m} \tilde{\epsilon}_{j,m})$ is equal to zero if $j \leq i-2m$ and~\eqref{-100} is true. 

Thereby we have:
\begin{eqnarray}
&\phantom{=}&
\left \| \frac{1}{n} \sum_{i=2}^{n} \sum_{j=(i-2c_{n}) \vee 1}^{i-1} K \left( \frac{i-j}{c_{n}} \right) (\tilde{\epsilon}_{i,m} \tilde{\epsilon}_{j,m} - \mathbb{E}(\tilde{\epsilon}_{i,m} \tilde{\epsilon}_{j,m})) \right \|_{\mathbb{L}^{1}} \notag \\
&\leq & \left \| \frac{1}{n} \sum_{i=2}^{n} \sum_{j=(i-2m+1) \vee 1}^{i-1} K \left( \frac{i-j}{c_{n}} \right) (\tilde{\epsilon}_{i,m} \tilde{\epsilon}_{j,m} - \mathbb{E}(\tilde{\epsilon}_{i,m} \tilde{\epsilon}_{j,m})) \right \|_{\mathbb{L}^{1}} \notag \\
&& +\: \left \| \frac{1}{n} \sum_{i=2m+1}^{n} \sum_{j=(i-2c_{n}) \vee 1}^{i-2m} K \left( \frac{i-j}{c_{n}} \right) \tilde{\epsilon}_{i,m} \tilde{\epsilon}_{j,m} \right \|_{\mathbb{L}^{1}} \notag \\
&\leq & \left \| \frac{1}{n} \sum_{k=1}^{2m-1} \sum_{i=1}^{n-k} K \left( \frac{k}{c_{n}} \right) (\tilde{\epsilon}_{i,m} \tilde{\epsilon}_{i+k,m} - \mathbb{E}(\tilde{\epsilon}_{i,m} \tilde{\epsilon}_{i+k,m})) \right \|_{\mathbb{L}^{1}} \notag \\
&& +\: \left \| \frac{1}{n} \sum_{i=2m+1}^{n} \sum_{j=(i-2c_{n}) \vee 1}^{i-2m} K \left( \frac{i-j}{c_{n}} \right) \tilde{\epsilon}_{i,m} \tilde{\epsilon}_{j,m} \right \|_{\mathbb{L}^{1}} \label{217}.
\end{eqnarray}

For the first term of the right-hand side of~\eqref{217}, since the kernel $K$ is bounded by $1$, we have by the triangle inequality and the stationarity of the error process:
\[\left \| \frac{1}{n} \sum_{k=1}^{2m-1} \sum_{i=1}^{n-k} K \left( \frac{k}{c_{n}} \right) (\tilde{\epsilon}_{i,m} \tilde{\epsilon}_{i+k,m} - \mathbb{E}(\tilde{\epsilon}_{i,m} \tilde{\epsilon}_{i+k,m})) \right \|_{\mathbb{L}^{1}} \leq \sum_{k=1}^{2m-1} \left \| \frac{1}{n} \sum_{i=1}^{n-k} (\tilde{\epsilon}_{i,m} \tilde{\epsilon}_{i+k,m} - \mathbb{E}(\tilde{\epsilon}_{0,m} \tilde{\epsilon}_{k,m})) \right \|_{\mathbb{L}^{1}}.\]
Using the $\mathbb{L}^{1}$-ergodic theorem, for all $k$ fixed, we deduce that:
\[\sum_{k=1}^{2m-1} \left \| \frac{1}{n} \sum_{i=1}^{n-k} (\tilde{\epsilon}_{i,m} \tilde{\epsilon}_{i+k,m} - \mathbb{E}(\tilde{\epsilon}_{0,m} \tilde{\epsilon}_{k,m})) \right \|_{\mathbb{L}^{1}} \xrightarrow[n \rightarrow \infty]{} 0.\]

It remains to be shown that:
\[\lim_{m \rightarrow \infty} \limsup_{n \rightarrow \infty} \left \| \frac{1}{n} \sum_{i=2m+1}^{n} \sum_{j=(i-2c_{n}) \vee 1}^{i-2m} K \left( \frac{i-j}{c_{n}} \right) \tilde{\epsilon}_{i,m} \tilde{\epsilon}_{j,m} \right \|_{\mathbb{L}^{1}} = 0.\]
We have:
\begin{multline*}
\left \| \frac{1}{n} \sum_{i=2m+1}^{n} \sum_{j=(i-2c_{n}) \vee 1}^{i-2m} K \left( \frac{i-j}{c_{n}} \right) \tilde{\epsilon}_{i,m} \tilde{\epsilon}_{j,m} \right \|_{\mathbb{L}^{1}} \\
= \Bigg \| \frac{1}{n} \sum_{i=2m+1}^{2[n/2m]m} \sum_{j=(i-2c_{n}) \vee 1}^{i-2m} K \left( \frac{i-j}{c_{n}} \right) \tilde{\epsilon}_{i,m} \tilde{\epsilon}_{j,m} \\
+ \frac{1}{n} \sum_{i=2[n/2m]m+1}^{n} \sum_{j=(i-2c_{n}) \vee 1}^{i-2m} K \left( \frac{i-j}{c_{n}} \right) \tilde{\epsilon}_{i,m} \tilde{\epsilon}_{j,m}\Bigg \|_{\mathbb{L}^{1}},
\end{multline*}
then by triangle inequality:
\begin{multline*}
\left \| \frac{1}{n} \sum_{i=2m+1}^{2[n/2m]m} \sum_{j=(i-2c_{n}) \vee 1}^{i-2m} K \left( \frac{i-j}{c_{n}} \right) \tilde{\epsilon}_{i,m} \tilde{\epsilon}_{j,m} + \frac{1}{n} \sum_{i=2[n/2m]m+1}^{n} \sum_{j=(i-2c_{n}) \vee 1}^{i-2m} K \left( \frac{i-j}{c_{n}} \right) \tilde{\epsilon}_{i,m} \tilde{\epsilon}_{j,m}\right \|_{\mathbb{L}^{1}} \\
\leq \left \| \frac{1}{n} \sum_{i=2m+1}^{2[n/2m]m} \sum_{j=(i-2c_{n}) \vee 1}^{i-2m} K \left( \frac{i-j}{c_{n}} \right) \tilde{\epsilon}_{i,m} \tilde{\epsilon}_{j,m} \right \|_{\mathbb{L}^{1}} \\
+ \left \| \frac{1}{n} \sum_{i=2[n/2m]m+1}^{n} \sum_{j=(i-2c_{n}) \vee 1}^{i-2m} K \left( \frac{i-j}{c_{n}} \right) \tilde{\epsilon}_{i,m} \tilde{\epsilon}_{j,m}\right \|_{\mathbb{L}^{1}},
\end{multline*}
and using a change of variable:
\begin{multline}
\left \| \frac{1}{n} \sum_{i=2m+1}^{2[n/2m]m} \sum_{j=(i-2c_{n}) \vee 1}^{i-2m} K \left( \frac{i-j}{c_{n}} \right) \tilde{\epsilon}_{i,m} \tilde{\epsilon}_{j,m} \right \|_{\mathbb{L}^{1}} \\
+ \left \| \frac{1}{n} \sum_{i=2[n/2m]m+1}^{n} \sum_{j=(i-2c_{n}) \vee 1}^{i-2m} K \left( \frac{i-j}{c_{n}} \right) \tilde{\epsilon}_{i,m} \tilde{\epsilon}_{j,m}\right \|_{\mathbb{L}^{1}} \\
\leq \sum_{l=1}^{2m} \left \| \frac{1}{n} \sum_{r=1}^{[n/2m]-1} \tilde{\epsilon}_{2rm+l,m} \sum_{j=(2rm+l-2c_{n}) \vee 1}^{2(r-1)m+l} K \left( \frac{2rm+l-j}{c_{n}} \right) \tilde{\epsilon}_{j,m} \right \|_{\mathbb{L}^{1}} \\
+ \left \| \frac{1}{n} \sum_{i=2[n/2m]m+1}^{n} \tilde{\epsilon}_{i,m} \sum_{j=(i-2c_{n}) \vee 1}^{i-2m} K \left( \frac{i-j}{c_{n}} \right) \tilde{\epsilon}_{j,m} \right \|_{\mathbb{L}^{1}} \label{218}.
\end{multline}

For the second term of the right-hand side of~\eqref{218}, by the Cauchy-Schwarz inequality and by stationarity, we get:
\begin{eqnarray}
\left \| \frac{1}{n} \sum_{i=2[n/2m]m+1}^{n} \tilde{\epsilon}_{i,m} \sum_{j=(i-2c_{n}) \vee 1}^{i-2m} K \left( \frac{i-j}{c_{n}} \right) \tilde{\epsilon}_{j,m} \right \|_{\mathbb{L}^{1}} 
&\leq & \frac{1}{n} \sum_{i=2[n/2m]m+1}^{n} \sum_{j=(i-2c_{n}) \vee 1}^{i-2m} \left \| \tilde{\epsilon}_{i,m} \tilde{\epsilon}_{j,m} \right \|_{\mathbb{L}^{1}} \notag \\
&\leq & \frac{1}{n} \sum_{i=2[n/2m]m+1}^{n} \sum_{j=(i-2c_{n}) \vee 1}^{i-2m} \left \| \tilde{\epsilon}_{0,m} \right \|_{\mathbb{L}^{2}}^{2} \notag \\
&\leq & \frac{4}{n} \sum_{i=2[n/2m]m+1}^{n} \sum_{j=(i-2c_{n}) \vee 1}^{i-2m} \left \| \epsilon_{0} \right \|_{\mathbb{L}^{2}}^{2} \notag \\
&\leq & 16m \frac{c_{n}}{n} \left \| \epsilon_{0} \right \|_{\mathbb{L}^{2}}^{2},
\label{219}
\end{eqnarray}
and~\eqref{219} tends to $0$ as $n$ tends to infinity.

Using ideas developed by Dedecker \cite{dedecker1998central} (see the proof of his Theorem $1$), we study the first term of the right-hand side of~\eqref{218} and we shall prove that it is negligible. Let $Z$ be:
\begin{equation}
Z(r,n,m) = \frac{1}{n} \sum_{r=1}^{[n/2m]-1}  \tilde{\epsilon}_{2rm+l,m} \sum_{j=(2rm+l-2c_{n}) \vee 1}^{2(r-1)m+l} K \left( \frac{2rm+l-j}{c_{n}} \right) \tilde{\epsilon}_{j,m}.
\label{220}
\end{equation}

Let $\varphi$ be the function defined by $\varphi'(0) = \varphi(0) = 0$ and $\varphi''(t) = (1 - \left | t \right |) \textbf{1}_{\{|t| <1\}}$, that is the symmetric function such that, for all $t$ greater or equal to $0$, $\varphi (t) = \frac{1}{6}(1-t)^{3} \textbf{1}_{\{t<1\}} + \frac{1}{2} t - \frac{1}{6}$.

Now, for all $\epsilon > 0$, by the growth of $\varphi$, there exists a constant $C$ such that:
\begin{eqnarray*}
\mathbb{E}(\left | Z(r,n,m) \right |) 
&=& \mathbb{E}(\left | (Z(r,n,m) \right | \textbf{1}_{\{|Z(r,n,m)|>\epsilon \}}) + \mathbb{E}(\left | Z(r,n,m) \right | \textbf{1}_{\{|Z(r,n,m)|<\epsilon \}}) \\
&\leq & C \mathbb{E}(\varphi(Z(r,n,m)) \textbf{1}_{\{|Z(r,n,m)|>\epsilon \}}) + \mathbb{E}(\left | Z(r,n,m) \right | \textbf{1}_{\{|Z(r,n,m)|<\epsilon \}}) \\
&\leq & C \mathbb{E}(\varphi(Z(r,n,m))) + \epsilon,
\end{eqnarray*}
because the function $\varphi$ is positive.

We conclude the proof using Lemma~\ref{222bis}.

\end{proof}

\begin{lem}
In the conditions developed at the end of the previous proof, for all fixed $m$:
\begin{equation}
\lim_{n \rightarrow \infty} \mathbb{E}(\varphi (Z(r,n,m))) = 0.
\label{222}
\end{equation}
\label{222bis}
\end{lem}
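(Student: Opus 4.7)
The plan is to expose a martingale-difference structure hidden in $Z(r,n,m)$ and then exploit the $C^{2}$ regularity of $\varphi$ (together with $0 \leq \varphi'' \leq 1$) through a Taylor expansion in order to reduce everything to a quadratic-variation bound. Fix the integer $l \in \{1,\ldots,2m\}$ implicit in the definition of $Z(r,n,m)$, set $\mathcal{G}_{r} = \mathcal{F}_{(2r-1)m+l}$ and
\[V_{r} = \sum_{j=(2rm+l-2c_{n}) \vee 1}^{2(r-1)m+l} K\!\left(\frac{2rm+l-j}{c_{n}}\right) \tilde{\epsilon}_{j,m},\]
so that $Z(r,n,m) = n^{-1} \sum_{r} D_{r}$ with $D_{r} = \tilde{\epsilon}_{2rm+l,m}\, V_{r}$. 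Since each $\tilde{\epsilon}_{j,m}$ is $\mathcal{F}_{j+m}$-measurable and $V_{r}$ only involves indices $j \leq 2(r-1)m+l$ (hence $j+m \leq (2r-1)m+l$), $V_{r}$ is $\mathcal{G}_{r}$-measurable; whereas $\tilde{\epsilon}_{2rm+l,m}$ is, by the very definition~\eqref{202}, orthogonal to $\mathbb{L}^{2}(\mathcal{G}_{r})$. Thus $(D_{r})_{r}$ is a martingale-difference sequence.

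With $S_{r} = \sum_{r' \leq r} D_{r'}$, the Taylor inequality $\varphi(x+y) \leq \varphi(x) + y\varphi'(x) + y^{2}/2$ (valid because $0 \leq \varphi'' \leq 1$) applied at $x = S_{r-1}/n$, $y = D_{r}/n$ and summed by telescoping kills the linear term in expectation (thanks to $\mathbb{E}(D_{r}|\mathcal{G}_{r}) = 0$), leaving
\[\mathbb{E}\bigl(\varphi(Z(r,n,m))\bigr) \leq \frac{1}{2n^{2}} \sum_{r=1}^{N} \mathbb{E}(D_{r}^{2}) = \frac{1}{2n^{2}} \sum_{r=1}^{N} \mathbb{E}\bigl(\tilde{\epsilon}_{2rm+l,m}^{2}\, V_{r}^{2}\bigr), \qquad N = [n/2m]-1.\]

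The main obstacle is then the bound on $\mathbb{E}\bigl(\tilde{\epsilon}_{2rm+l,m}^{2}\, V_{r}^{2}\bigr)$. The projection decomposition $\tilde{\epsilon}_{i,m} = \sum_{j=i-m+1}^{i+m} P_{j}(\epsilon_{i})$ shows that $(\tilde{\epsilon}_{j,m})_{j}$ is a stationary $2m$-dependent sequence with $\|\tilde{\epsilon}_{0,m}\|_{\mathbb{L}^{2}} \leq 2\|\epsilon_{0}\|_{\mathbb{L}^{2}}$, and since $V_{r}$ has at most $2c_{n}$ summands, one readily gets $\mathbb{E}(V_{r}^{2}) \leq C_{m}\, c_{n}$ with $C_{m}$ depending only on $m$ and $\|\epsilon_{0}\|_{\mathbb{L}^{2}}$. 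The difficulty is that $\tilde{\epsilon}_{2rm+l,m}^{2}$ and $V_{r}^{2}$ are correlated and their product is only in $\mathbb{L}^{1}$, forbidding a direct factorization. The way out is a truncation: split $\tilde{\epsilon}_{2rm+l,m}$ at a level $T_{n}$, estimate the bounded part by $T_{n}^{2}\, \mathbb{E}(V_{r}^{2}) \leq T_{n}^{2} C_{m} c_{n}$, and control the tail part by conditioning on $\mathcal{G}_{r}$ (which makes $V_{r}^{2}$ measurable) combined with the Lindeberg-type hypothesis~\eqref{48}, which gives exactly the right tail estimate when $T_{n}$ is calibrated of order $\sqrt{n/c_{n}}$. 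Summing over the $N \sim n/(2m)$ indices yields $\sum_{r} \mathbb{E}(D_{r}^{2}) = o(n^{2})$, whence $\mathbb{E}(\varphi(Z(r,n,m))) \to 0$.

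Of the three steps, the last one is the delicate one: it is precisely the interplay between the truncation level $T_{n}$ and the bandwidth $c_{n}$ that forces the technical hypothesis~\eqref{48} to appear in the statement of Theorem~\ref{50}; the first two steps are then mechanical consequences of the martingale structure together with the $C^{2}$ design of $\varphi$.
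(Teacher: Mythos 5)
Your skeleton --- blocking over $l$, the martingale-difference identification $\mathbb{E}(\tilde{\epsilon}_{2rm+l,m}\mid\mathcal{F}_{(2r-1)m+l})=0$, a Taylor expansion along the partial sums with the linear term killed in expectation --- is exactly the paper's. The genuine gap is in your quadratic step. The inequality $\varphi(x+y)\le\varphi(x)+y\varphi'(x)+y^{2}/2$ reduces everything to $\frac{1}{2n^{2}}\sum_{r}\mathbb{E}(D_{r}^{2})$, but under the standing hypothesis $\epsilon_{0}\in\mathbb{L}^{2}$ the increment $D_{r}=\tilde{\epsilon}_{2rm+l,m}V_{r}$ is a product of two \emph{dependent} $\mathbb{L}^{2}$ variables, hence only in $\mathbb{L}^{1}$: $\mathbb{E}(D_{r}^{2})$ may be infinite, and your reduction is then vacuous. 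Truncating afterwards cannot repair an inequality whose right-hand side is already $+\infty$; the truncation has to live \emph{inside} the Taylor remainder. This is precisely the role of the paper's function $\psi(h)=|h|^{2}\mathbf{1}_{\{|h|\le 1\}}+(2|h|-1)\mathbf{1}_{\{|h|>1\}}$, which satisfies $|\varphi(x+h)-\varphi(x)-h\varphi'(x)|\le\psi(h)$ and $\psi(h)\le 2|h|(1\wedge|h|)$: a quadratic with built-in truncation at level $1$, always integrable on $\mathbb{L}^{1}$ increments. (If you instead truncate $\tilde{\epsilon}_{2rm+l,m}$ \emph{before} expanding, it ceases to be a martingale difference, the linear term no longer vanishes, and further centering terms must be handled --- a nontrivial repair you do not address.)

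Your tail step also fails as stated. Conditioning on $\mathcal{G}_{r}$ makes $V_{r}^{2}$ measurable, but leaves you with $\mathbb{E}\bigl(V_{r}^{2}\,\mathbb{E}(\tilde{\epsilon}^{2}\mathbf{1}_{\{|\tilde{\epsilon}|>T_{n}\}}\mid\mathcal{G}_{r})\bigr)$, and this conditional expectation is a genuine random variable: the blocks are orthogonal, not independent, so you cannot replace it by its mean, and no almost-sure bound is available without moments beyond $\mathbb{L}^{2}$. There is moreover an arithmetic problem: with $T_{n}\asymp\sqrt{n/c_{n}}$ the bounded part sums to $\frac{N\,T_{n}^{2}C_{m}c_{n}}{2n^{2}}\asymp\frac{C_{m}}{4m}$ with $N\approx n/2m$, a constant rather than $o(1)$, so the calibration does not close even formally. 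The paper circumvents both difficulties by bounding $\mathbb{E}(\psi(A(i,m)))$ via the convexity of $\psi$ (Lemma 3 of Dedecker \cite{dedecker1998central}), which splits the product $\tilde{\epsilon}_{i,m}\tilde{\epsilon}_{j,m}$ into squares, and then transfers the resulting quantity $c_{n}\mathbb{E}\bigl(|\tilde{\epsilon}_{0,m}|^{2}(1\wedge\frac{c_{n}}{n}|\tilde{\epsilon}_{0,m}|^{2})\bigr)$ to $\epsilon_{0}$ by conditional Jensen arguments (convexity of $\psi$, concavity of $x\mapsto 1\wedge x$), at which point hypothesis~\eqref{48} finishes the proof. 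Your heuristic for where~\eqref{48} bites --- a truncation at level $\sqrt{n/c_{n}}$ interacting with the bandwidth --- is correct in spirit (it is what $\psi$ implements after rescaling), but steps two and three of your argument do not constitute a proof. A minor additional slip: orthogonality of $\tilde{\epsilon}_{i,m}$ to $\mathcal{F}_{i-m}$ makes the sequence $2m$-uncorrelated, not $2m$-dependent; this is harmless, since uncorrelatedness already yields $\mathbb{E}(V_{r}^{2})\le C_{m}c_{n}$.
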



\begin{proof}[\textbf{Proof of Lemma~\ref{222bis}}]

To prove that~\eqref{222} is negligible, the two following results are needed:
\begin{lem}
The following inequality holds:
\[| \varphi(x+h) - \varphi(x) - h \varphi'(x) | \leq \psi(h),\]
where:
\[\psi(h) = |h|^2 \textbf{1}_{\{|h| \leq 1\}} + (2|h| -1)\textbf{1}_{\{|h| > 1\}}.\]
\end{lem}

\begin{proof}
The function $\varphi$ is continuous and differentiable in the neighborhood of $0$. Using the Taylor formula, we have the following majorations:
\[| \varphi(x+h) - \varphi(x) - h \varphi'(x) | \leq \frac{|h|^{2}}{2} \sup_{u \in \mathbb{R}} | \varphi''(u) | \leq \frac{|h|^{2}}{2};\]
then, by the triangle inequality:
\[\left | \varphi(x+h) - \varphi(x) - h \varphi'(x) \right | \leq \left | \varphi(x+h) - \varphi(x) \right | + \left | h \right | \left | \varphi'(x) \right | \leq 2 \left | h \right | \sup_{u \in \mathbb{R}} \left | \varphi'(u) \right | \leq \left | h \right |.\]

The proof is complete.

\end{proof}

\begin{lem}
For all real $x$ in $\mathbb{R}$, we have:
\[|x| (1 \wedge |x|) \leq \psi(x) \leq 2|x|(1 \wedge |x|).\]
\label{lem626}
\end{lem}

The proof of Lemma~\ref{lem626}, being elementary, is left to the reader.\\

So we get:
\begin{multline*}
\mathbb{E}(\varphi (Z(r,n,m))) = \sum_{i=1}^{\left[ n/2m \right] -1} \mathbb{E} \Bigg( \Bigg[ \varphi \Bigg( \frac{1}{n} \sum_{q=1}^{i}  \tilde{\epsilon}_{2qm+l,m} \sum_{j=(2qm+l-2c_{n}) \vee 1}^{2(q-1)m+l} K \Bigg( \frac{2qm+l-j}{c_{n}} \Bigg) \tilde{\epsilon}_{j,m} \Bigg) \\
- \varphi \Bigg( \frac{1}{n} \sum_{q=1}^{i-1} \tilde{\epsilon}_{2qm+l,m} \sum_{j=(2qm+l-2c_{n}) \vee 1}^{2(q-1)m+l} K \Bigg( \frac{2qm+l-j}{c_{n}} \Bigg) \tilde{\epsilon}_{j,m} \Bigg) \Bigg] \Bigg) \\
\leq \sum_{i=1}^{\left[ n/2m \right] -1} \Bigg| \mathbb{E} \Bigg( \varphi \Bigg( \frac{1}{n} \sum_{q=1}^{i}  \tilde{\epsilon}_{2qm+l,m} \sum_{j=(2qm+l-2c_{n}) \vee 1}^{2(q-1)m+l} K \Bigg( \frac{2qm+l-j}{c_{n}} \Bigg) \tilde{\epsilon}_{j,m} \Bigg) \\
- \varphi \Bigg( \frac{1}{n} \sum_{q=1}^{i-1} \tilde{\epsilon}_{2qm+l,m} \sum_{j=(2qm+l-2c_{n}) \vee 1}^{2(q-1)m+l} K \Bigg( \frac{2qm+l-j}{c_{n}} \Bigg) \tilde{\epsilon}_{j,m} \Bigg) \Bigg) \Bigg|.
\end{multline*}
Then applying Taylor's expansion, with :
\begin{eqnarray*}
x &=& \frac{1}{n} \sum_{q=1}^{i-1}  \tilde{\epsilon}_{2qm+l,m} \sum_{j=(2qm+l-2c_{n}) \vee 1}^{2(q-1)m+l} K \left( \frac{2qm+l-j}{c_{n}} \right) \tilde{\epsilon}_{j,m} \\
A(i,m) &=& \frac{1}{n} \tilde{\epsilon}_{2im+l,m} \sum_{j=(2im+l-2c_{n}) \vee 1}^{2(i-1)m+l} K \left( \frac{2im+l-j}{c_{n}} \right) \tilde{\epsilon}_{j,m} \\
x+A(i,m) &=& \frac{1}{n} \sum_{q=1}^{i}  \tilde{\epsilon}_{2qm+l,m} \sum_{j=(2qm+l-2c_{n}) \vee 1}^{2(q-1)m+l} K \left( \frac{2qm+l-j}{c_{n}} \right) \tilde{\epsilon}_{j,m},
\end{eqnarray*}
we have:
\begin{eqnarray*}
\mathbb{E}(\varphi (Z(r,n,m))) 
&\leq & \sum_{i=1}^{[n/2m]-1} \Bigg| \mathbb{E} \Bigg( \varphi' \left( \frac{1}{n} \sum_{q=1}^{i-1}  \tilde{\epsilon}_{2qm+l,m} \sum_{j=(2qm+l-2c_{n}) \vee 1}^{2(q-1)m+l} K \left( \frac{2qm+l-j}{c_{n}} \right) \tilde{\epsilon}_{j,m} \right) \\
&& \qquad \times\: \frac{1}{n} \tilde{\epsilon}_{2im+l,m} \sum_{j=(2im+l-2c_{n}) \vee 1}^{2(i-1)m+l} K \left( \frac{2im+l-j}{c_{n}} \right) \tilde{\epsilon}_{j,m} + \psi(A(i,m)) \Bigg) \Bigg|.
\end{eqnarray*}
Then by triangle inequality, we obtain:
\begin{eqnarray*}
\mathbb{E}(\varphi (Z(r,n,m))) 
&\leq & \sum_{i=1}^{[n/2m]-1} \Bigg| \mathbb{E} \Bigg( \varphi' \left( \frac{1}{n} \sum_{q=1}^{i-1}  \tilde{\epsilon}_{2qm+l,m} \sum_{j=(2qm+l-2c_{n}) \vee 1}^{2(q-1)m+l} K \left( \frac{2qm+l-j}{c_{n}} \right) \tilde{\epsilon}_{j,m} \right) \\ 
&& \quad \times\: \frac{1}{n} \tilde{\epsilon}_{2im+l,m} \sum_{j=(2im+l-2c_{n}) \vee 1}^{2(i-1)m+l} K \left( \frac{2im+l-j}{c_{n}} \right) \tilde{\epsilon}_{j,m} \Bigg) \Bigg| \\
&& \qquad + \sum_{i=1}^{[n/2m]-1} \Bigg| \mathbb{E} \left( \left| A(i,m)  \right|^{2} \textbf{1}_{\{|A(i,m)| \leq 1\}} + \left( 2 \left| A(i,m) \right| -1 \right) \textbf{1}_{\{|A(i,m)|>1\}} \right) \Bigg| \\
&\leq & \sum_{i=1}^{[n/2m]-1} \Bigg| \mathbb{E} \Bigg( \varphi' \left( \frac{1}{n} \sum_{q=1}^{i-1}  \tilde{\epsilon}_{2qm+l,m} \sum_{j=(2qm+l-2c_{n}) \vee 1}^{2(q-1)m+l} K \left( \frac{2qm+l-j}{c_{n}} \right) \tilde{\epsilon}_{j,m} \right) \\
&& \quad \times\: \frac{1}{n} \tilde{\epsilon}_{2im+l,m} \sum_{j=(2im+l-2c_{n}) \vee 1}^{2(i-1)m+l} K \left( \frac{2im+l-j}{c_{n}} \right) \tilde{\epsilon}_{j,m} \Bigg) \Bigg| \\
&& \qquad + \sum_{i=1}^{[n/2m]-1} \mathbb{E} \left( \left| A(i,m)  \right|^{2} \textbf{1}_{\{|A(i,m)| \leq 1\}} + \left( 2 \left| A(i,m) \right| -1 \right) \textbf{1}_{\{|A(i,m)|>1\}} \right). \\
\end{eqnarray*}

By definition, $(\tilde{\epsilon}_{i,m})_{i \in \mathbb{Z}}$ satisfies:
\[\mathbb{E}(\tilde{\epsilon}_{2im+l,m} | \mathcal{F}_{2im+l-m}) = 0.\]
Hence:
\begin{multline*}
\mathbb{E}(\varphi (Z(r,n,m))) \leq \sum_{i=1}^{[n/2m]-1} \mathbb{E} \left( | A(i,m) |^{2} \textbf{1}_{\{|A(i,m)| \leq 1\}} + (2|A(i,m)| -1) \textbf{1}_{\{|A(i,m)|>1\}} \right) \\
= \sum_{i=1}^{[n/2m]-1} \mathbb{E}( \psi ( | A(i,m) | ).
\end{multline*}

For this term, put:
\[B(i,j,m,l) = \frac{[(2(i-1)m+l) - ((2im+l-2c_{n}) \vee 1)+1]}{n} K \left( \frac{2im+l-j}{c_{n}} \right) \tilde{\epsilon}_{2im+l,m} \tilde{\epsilon}_{j,m}.\]
Using the convexity of $\psi$ and Lemma 3 of Dedecker \cite{dedecker1998central}, we have that:
\begin{multline*}
\mathbb{E}(\psi(A(i,m))) \\
\leq \frac{1}{[(2(i-1)m+l) - ((2im+l-2c_{n}) \vee 1)+1]} \sum_{j=(2im+l-2c_{n}) \vee 1}^{2(i-1)m+l} \mathbb{E} \left( \psi \left( B(i,j,m,l) \right) \right).
\end{multline*}
Then:
\begin{multline*}
\frac{1}{[(2(i-1)m+l) - ((2im+l-2c_{n}) \vee 1)+1]} \sum_{j=(2im+l-2c_{n}) \vee 1}^{2(i-1)m+l} \mathbb{E} \left( \psi \left( B(i,j,m,l) \right) \right) \\
\leq \frac{2}{[(2(i-1)m+l) - ((2im+l-2c_{n}) \vee 1)+1]} \\
\sum_{j=(2im+l-2c_{n}) \vee 1}^{2(i-1)m+l} \mathbb{E} \left( \frac{2c_{n}}{n} | \tilde{\epsilon}_{0,m} |^{2} \left( 1 \wedge \frac{2c_{n}}{n} | \tilde{\epsilon}_{0,m} |^{2} \right) \right),
\end{multline*}
and:
\begin{multline*}
\frac{2}{[(2(i-1)m+l) - ((2im+l-2c_{n}) \vee 1)+1]} \sum_{j=(2im+l-2c_{n}) \vee 1}^{2(i-1)m+l} \mathbb{E} \left( \frac{2c_{n}}{n} | \tilde{\epsilon}_{0,m} |^{2} \left( 1 \wedge \frac{2c_{n}}{n} | \tilde{\epsilon}_{0,m} |^{2} \right) \right) \\
\leq \frac{8c_{n}}{n} \mathbb{E} \left( | \tilde{\epsilon}_{0,m} |^{2} \left( 1 \wedge \frac{c_{n}}{n} | \tilde{\epsilon}_{0,m} |^{2} \right) \right).
\end{multline*}

Thus we can conclude if, for $m$ fixed:
\begin{equation}
\lim_{n \rightarrow \infty} c_{n} \mathbb{E} \left( | \tilde{\epsilon}_{0,m} |^{2} \left( 1 \wedge \frac{c_{n}}{n} | \tilde{\epsilon}_{0,m} |^{2} \right) \right) = 0.
\label{223}
\end{equation}

To prove~\eqref{223}, notice that:
\begin{eqnarray}
c_{n} \mathbb{E} \left( | \tilde{\epsilon}_{0,m} |^{2} \left( 1 \wedge \frac{c_{n}}{n} | \tilde{\epsilon}_{0,m} |^{2} \right) \right)
&\leq & 4c_{n} \mathbb{E} \left( | \mathbb{E}(\epsilon_{0}|\mathcal{F}_{m}) |^{2} \left( 1 \wedge \frac{c_{n}}{n} | \mathbb{E}(\epsilon_{0}|\mathcal{F}_{m}) |^{2} \right) \right) \notag \\
&& +\: 4c_{n} \mathbb{E} \left( | \mathbb{E}(\epsilon_{0}|\mathcal{F}_{m}) |^{2} \left( 1 \wedge \frac{c_{n}}{n} | \mathbb{E}(\epsilon_{0}|\mathcal{F}_{-m}) |^{2} \right) \right) \notag \\
&& \quad + 4c_{n} \mathbb{E} \left( | \mathbb{E}(\epsilon_{0}|\mathcal{F}_{-m}) |^{2} \left( 1 \wedge \frac{c_{n}}{n} | \mathbb{E}(\epsilon_{0}|\mathcal{F}_{m}) |^{2} \right) \right)\notag \\
&& \qquad + 4c_{n} \mathbb{E} \left( | \mathbb{E}(\epsilon_{0}|\mathcal{F}_{-m}) |^{2} \left( 1 \wedge \frac{c_{n}}{n} | \mathbb{E}(\epsilon_{0}|\mathcal{F}_{-m}) |^{2} \right) \right).
\label{224}
\end{eqnarray}

For the first term and for the last term, we use the convexity of $\psi$:
\begin{eqnarray}
c_{n} \mathbb{E} \left( | \mathbb{E}(\epsilon_{0}|\mathcal{F}_{m}) |^{2} \left( 1 \wedge \frac{c_{n}}{n} | \mathbb{E}(\epsilon_{0}|\mathcal{F}_{m}) |^{2} \right) \right) 
&\leq & n \mathbb{E} \left( \psi \left( \mathbb{E} \left( \frac{c_{n}}{n} |\epsilon_{0}|^{2}|\mathcal{F}_{m} \right) \right) \right) \notag \\
&\leq & n \mathbb{E} \left( \mathbb{E} \left( \psi \left(\frac{c_{n}}{n} |\epsilon_{0}|^{2} \right)|\mathcal{F}_{m} \right) \right) \notag \\
&\leq & n \mathbb{E} \left( \psi \left( \frac{c_{n}}{n} |\epsilon_{0}|^{2} \right) \right) \notag \\
&\leq & 2c_{n} \mathbb{E} \left( |\epsilon_{0}|^{2} \left( 1 \wedge \frac{c_{n}}{n} |\epsilon_{0}|^{2} \right) \right). \label{225}
\end{eqnarray}
With the same idea, for the last term, we show that:
\begin{equation}
c_{n} \mathbb{E} \left( |\mathbb{E}(\epsilon_{0}|\mathcal{F}_{-m})|^{2} \left( 1 \wedge \frac{c_{n}}{n} |\mathbb{E}(\epsilon_{0}|\mathcal{F}_{-m})|^{2} \right) \right) \leq 2c_{n} \mathbb{E} \left( |\epsilon_{0}|^{2} \left( 1 \wedge \frac{c_{n}}{n} |\epsilon_{0}|^{2} \right) \right).
\label{226}
\end{equation}

For the second term, with the convexity of $\psi$, we have:
\begin{eqnarray}
n \mathbb{E} \left( \frac{c_{n}}{n} |\mathbb{E}(\epsilon_{0}|\mathcal{F}_{m})|^{2} \left( 1 \wedge \frac{c_{n}}{n} |\mathbb{E}(\epsilon_{0}|\mathcal{F}_{-m})|^{2} \right) \right) 
&\leq & n \mathbb{E} \left( \frac{c_{n}}{n} \mathbb{E}(|\epsilon_{0}|^{2}|\mathcal{F}_{m}) \left( 1 \wedge \frac{c_{n}}{n} \mathbb{E}(|\epsilon_{0}|^{2}|\mathcal{F}_{-m}) \right) \right) \notag \\
&\leq & n \mathbb{E} \left( \psi \left( \mathbb{E} \left( \frac{c_{n}}{n}|\epsilon_{0}|^{2} | \mathcal{F}_{-m} \right) \right) \right) \notag \\
&\leq & 2c_{n} \mathbb{E} \left( |\epsilon_{0}|^{2} \left( 1 \wedge \frac{c_{n}}{n} |\epsilon_{0}|^{2} \right) \right). \label{227}
\end{eqnarray}

Since $g : x \rightarrow 1 \wedge x$ is a concave function on $\mathbb{R}_{+}^{\ast}$ and $\psi$ is a convex function, for the third term, we obtain that:

\begin{eqnarray}
&\phantom{=}&
c_{n} \mathbb{E} \left( |\mathbb{E}(\epsilon_{0}|\mathbb{F}_{-m})|^{2} \left( 1 \wedge \frac{c_{n}}{n} |\mathbb{E}(\epsilon_{0}|\mathcal{F}_{m})|^{2} \right) \right) \notag \\
&\leq & n \mathbb{E} \left(\frac{c_{n}}{n} \mathbb{E}(|\epsilon_{0}|^{2}|\mathbb{F}_{-m}) \mathbb{E} \left( g \left( \frac{c_{n}}{n} \mathbb{E}(|\epsilon_{0}|^{2} | \mathcal{F}_{m}) \right) | \mathcal{F}_{-m} \right) \right) \notag \\
&\leq & n \mathbb{E} \left( \frac{c_{n}}{n} \mathbb{E}(|\epsilon_{0}|^{2}|\mathbb{F}_{-m}) g \left( \mathbb{E} \left( \frac{c_{n}}{n} \mathbb{E}(|\epsilon_{0}|^{2} | \mathcal{F}_{m}) | \mathcal{F}_{-m} \right) \right) \right) \notag \\
&\leq & n \mathbb{E} \left( \frac{c_{n}}{n} \mathbb{E}(|\epsilon_{0}|^{2}|\mathbb{F}_{-m}) \left( 1 \wedge \frac{c_{n}}{n} \mathbb{E}(|\epsilon_{0}|^{2} | \mathcal{F}_{-m}) \right) \right) \notag \\
&\leq & n \mathbb{E} \left( \psi \left( \frac{c_{n}}{n} \mathbb{E}(|\epsilon_{0}|^{2}|\mathbb{F}_{-m}) \right) \right) \notag \\
&\leq & 2c_{n} \mathbb{E} \left( |\epsilon_{0}|^{2} \left( 1 \wedge \frac{c_{n}}{n} |\epsilon_{0}|^{2} \right) \right). \label{228}
\end{eqnarray}

Using~\eqref{224} to~\eqref{228}, we deduce that \eqref{223} is verified as soon as~\eqref{48} is true.

\end{proof}

\subsubsection{Proposition~\ref{201bis}}

\begin{proof}[Proof]

Recall that:
\[f_{n}(\lambda) = \frac{1}{2\pi} \sum_{|k| \leq n-1} K \left( \frac{|k|}{c_{n}} \right) \hat{\gamma}_{k} e^{i k \lambda},\]
where:
\[\hat{\gamma}_{k} = \frac{1}{n} \sum_{j=1}^{n-|k|} \epsilon_{j} \epsilon_{j+|k|} = \frac{1}{n} \sum_{j=1}^{n-|k|} \left( Y_{j} - \sum_{l=1}^{p} x_{j,l} \beta_{l} \right) \left( Y_{j+|k|} - \sum_{l=1}^{p} x_{j+|k|,l} \beta_{l} \right), \ \quad \ 0 \leq |k| \leq (n-1),\]
and:
\[f_{n}^{\ast}(\lambda) = \frac{1}{2\pi} \sum_{|k| \leq n-1} K \left( \frac{|k|}{c_{n}} \right) \hat{\gamma}_{k}^{\ast} e^{i k \lambda},\]
where:
\[\hat{\gamma}_{k}^{\ast} = \frac{1}{n} \sum_{j=1}^{n-|k|} \hat{\epsilon}_{j} \hat{\epsilon}_{j+|k|} = \frac{1}{n} \sum_{j=1}^{n-|k|} \left( Y_{j} - \sum_{l=1}^{p} x_{j,l} \hat{\beta}_{l} \right) \left( Y_{j+|k|} - \sum_{l=1}^{p} x_{j+|k|,l} \hat{\beta}_{l} \right), \ \quad \ 0 \leq |k| \leq (n-1).\]
Thus we have:
\begin{eqnarray}
\left \| f_{n}^{\ast}(\lambda) - f_{n}(\lambda) \right \|_{\mathbb{L}^{1}} 
&=& \left \| \frac{1}{2\pi} \sum_{|k| \leq n-1} K \left( \frac{|k|}{c_{n}} \right) \hat{\gamma}_{k}^{\ast} e^{i k \lambda} - \frac{1}{2\pi} \sum_{|k| \leq n-1} K \left( \frac{|k|}{c_{n}} \right) \hat{\gamma}_{k} e^{i k \lambda} \right \|_{\mathbb{L}^{1}} \notag \\
&=& \left \| \frac{1}{2\pi} \sum_{|k| \leq 2c_{n}} K \left( \frac{|k|}{c_{n}} \right) \left[ \hat{\gamma}_{k}^{\ast} - \hat{\gamma}_{k} \right] e^{i k \lambda}\right \|_{\mathbb{L}^{1}} \notag \\
&\leq & \frac{1}{2\pi} \sum_{|k| \leq 2c_{n}} \left \| \hat{\gamma}_{k}^{\ast} - \hat{\gamma}_{k} \right \|_{\mathbb{L}^{1}}. \label{250}
\end{eqnarray}
Since $\frac{c_{n}}{n}$ tends to $0$ when $n$ tends to infinity, it remains to prove that:
\begin{equation}
\sup_{|k| \leq 2c_{n}} \left \| \hat{\gamma}_{k}^{\ast} - \hat{\gamma}_{k} \right \|_{\mathbb{L}^{1}} = \mathcal{O} \left( \frac{1}{n} \right).
\label{251}
\end{equation}

\begin{lem}
The following inequality is verified:
\begin{eqnarray}
\left \| \hat{\gamma}_{k}^{\ast} - \hat{\gamma}_{k} \right \|_{\mathbb{L}^{1}} 
&=& \Bigg \| \frac{1}{n} \sum_{j=1}^{n-|k|} \left( Y_{j} - \sum_{l=1}^{p} x_{j,l} \hat{\beta}_{l} \right) \left( Y_{j+|k|} - \sum_{l=1}^{p} x_{j+|k|,l} \hat{\beta}_{l} \right) \notag \\
&& -\: \frac{1}{n} \sum_{j=1}^{n-|k|} \left( Y_{j} - \sum_{l=1}^{p} x_{j,l} \beta_{l} \right) \left( Y_{j+|k|} - \sum_{l=1}^{p} x_{j+|k|,l} \beta_{l} \right) \Bigg \|_{\mathbb{L}^{1}} \notag \\
&\leq & \frac{1}{2n} \sum_{l=1}^{p} \sum_{l'=1}^{p} \left \| \left( \beta_{l} - \hat{\beta_{l}} \right)^{2} \sum_{j=1}^{n-|k|} x_{j,l}^{2}  \right \|_{\mathbb{L}^{1}} + \frac{1}{2n} \sum_{l=1}^{p} \sum_{l'=1}^{p} \left \|  \left(  \beta_{l'} - \hat{\beta}_{l'} \right)^{2} \sum_{j=1}^{n-|k|} x_{j+|k|,l'}^{2}  \right \|_{\mathbb{L}^{1}} \notag \\ 
&& +\: \frac{1}{n} \sum_{l=1}^{p} \left \| \sum_{j=1}^{n-|k|} \epsilon_{j} x_{j+|k|,l} \left(  \beta_{l} - \hat{\beta}_{l} \right) \right \|_{\mathbb{L}^{1}} + \frac{1}{n} \sum_{l=1}^{p} \left \| \sum_{j=1}^{n-|k|} \epsilon_{j+|k|} x_{j,l} \left( \beta_{l} - \hat{\beta_{l}} \right) \right \|_{\mathbb{L}^{1}}. \label{252}
\end{eqnarray}
\label{6.2.7}
\end{lem}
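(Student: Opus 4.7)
The plan is to expand $\hat{\gamma}_k^\ast - \hat{\gamma}_k$ in closed form, separate it into the three natural types of terms, and then bound the quadratic term by Cauchy--Schwarz followed by an AM--GM inequality.

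First I would write $\hat{\epsilon}_j - \epsilon_j = \sum_{l=1}^p x_{j,l}(\beta_l - \hat{\beta}_l)$, so that
\[
\hat{\epsilon}_j \hat{\epsilon}_{j+|k|} - \epsilon_j \epsilon_{j+|k|}
= \epsilon_{j+|k|}\!\!\sum_{l=1}^{p}\! x_{j,l}(\beta_l-\hat\beta_l)
+ \epsilon_j\!\!\sum_{l=1}^{p}\! x_{j+|k|,l}(\beta_l-\hat\beta_l)
+ \sum_{l,l'=1}^{p}\! x_{j,l}x_{j+|k|,l'}(\beta_l-\hat\beta_l)(\beta_{l'}-\hat\beta_{l'}).
\]
Summing over $j$ from $1$ to $n-|k|$, dividing by $n$ and applying the triangle inequality in $\mathbb{L}^1$ yields three contributions: two \emph{linear} in $\beta - \hat\beta$ (which match the last two terms of the claimed bound directly, since $\beta_l - \hat\beta_l$ pulls out of the inner sum over $j$) and one \emph{quadratic} in $\beta - \hat\beta$.

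The only non-trivial step is the quadratic contribution. For this, I would apply Cauchy--Schwarz to the deterministic inner sum,
\[
\Bigl|\sum_{j=1}^{n-|k|} x_{j,l}\,x_{j+|k|,l'}\Bigr|
\;\le\; \Bigl(\sum_{j=1}^{n-|k|} x_{j,l}^{2}\Bigr)^{1/2}\Bigl(\sum_{j=1}^{n-|k|} x_{j+|k|,l'}^{2}\Bigr)^{1/2},
\]
and then use the elementary inequality $|ab|\le\tfrac12(a^2+b^2)$ with
$a = |\beta_l-\hat\beta_l|\bigl(\sum_j x_{j,l}^2\bigr)^{1/2}$ and
$b = |\beta_{l'}-\hat\beta_{l'}|\bigl(\sum_j x_{j+|k|,l'}^2\bigr)^{1/2}$.
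This splits the quadratic term into exactly the two sums over $l,l'$ with the prefactor $\tfrac{1}{2n}$ that appear in the statement. Taking $\mathbb{L}^1$-norms and applying the triangle inequality once more gives the claimed bound.

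There is no real obstacle here: the lemma is purely algebraic, combined with two classical inequalities (Cauchy--Schwarz and AM--GM). The only thing to be careful about is matching the exact form of the right-hand side in the statement, in particular keeping the double sum $\sum_{l,l'}$ (rather than collapsing it using $\sum_{l'} 1 = p$), which is forced by the symmetric AM--GM split. The useful estimates that will later make~\eqref{251} follow from this bound---namely controlling $\mathbb{E}((\hat\beta_l-\beta_l)^2)\sum_j x_{j,l}^2$ via the second-order moment convergence~\eqref{9} from Hannan's theorem, and the cross terms via Cauchy--Schwarz in $\mathbb{L}^2(\Omega)$---are not part of this lemma and would be handled afterward.
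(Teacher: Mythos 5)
Your proof is correct and takes essentially the same route as the paper: the same expansion $\hat{\epsilon}_{j} = \epsilon_{j} + \sum_{l=1}^{p} x_{j,l}(\beta_{l} - \hat{\beta}_{l})$, the same triangle-inequality split into one quadratic and two linear contributions, and the same final bound. The only (immaterial) difference is that the paper bounds the quadratic term by applying $|ab| \leq \frac{1}{2}(a^{2}+b^{2})$ termwise inside the sum over $j$, whereas you first apply Cauchy--Schwarz to that sum and then AM--GM to the aggregated square roots; both yield the identical right-hand side, including the double sum over $l, l'$.
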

The proof of this lemma will be given in Section $5.3$.\\

It remains to calculate these four terms. For the first term of the right-hand side, for all $l$, $l'$ fixed and for all $k$, we have:
\[\left \| \left( \beta_{l} - \hat{\beta_{l}} \right)^{2} \sum_{j=1}^{n-|k|} x_{j,l}^{2}  \right \|_{\mathbb{L}^{1}} \leq \left \| \left( \beta_{l} - \hat{\beta_{l}} \right)^{2} \sum_{j=1}^{n} x_{j,l}^{2}  \right \|_{\mathbb{L}^{1}},\]
and:
\[\left \| \left( \beta_{l} - \hat{\beta_{l}} \right)^{2} \sum_{j=1}^{n} x_{j,l}^{2}  \right \|_{\mathbb{L}^{1}} = \left \| d_{l}(n)^{2} \left( \beta_{l} - \hat{\beta_{l}} \right)^{2}  \right \|_{\mathbb{L}^{1}} = d_{l}(n)^{2} \mathbb{E} \left( \left( \beta_{l} - \hat{\beta_{l}} \right)^{2}  \right).\]
Hannan has proven in his paper \cite{hannan73clt} a Central Limit Theorem~\eqref{15} with the convergence of the second order moments~\eqref{15bis}. Consequently, we have:
\[\left \| d_{l}(n)^{2} \left( \beta_{l} - \hat{\beta_{l}} \right)^{2}  \right \|_{\mathbb{L}^{1}} = \mathcal{O}(1),\]
hence:
\[\sup_{|k| \leq 2c_{n}} \left( \left \| \left( \beta_{l} - \hat{\beta_{l}} \right)^{2} \sum_{j=1}^{n-|k|} x_{j,l}^{2}  \right \|_{\mathbb{L}^{1}} \right) \leq d_{l}(n)^{2} \mathbb{E} \left( \left( \hat{\beta}_{l} - \beta_{l} \right)^{2}  \right) = \mathcal{O}(1).\]
So we can conclude:
\[\sup_{|k| \leq 2c_{n}} \left( \frac{1}{2n} \sum_{l=1}^{p} \sum_{l'=1}^{p} \left \| \left( \beta_{l} - \hat{\beta_{l}} \right)^{2} \sum_{j=1}^{n-|k|} x_{j,l}^{2}  \right \|_{\mathbb{L}^{1}} \right) = \mathcal{O} \left( \frac{1}{n} \right).\]

For the second term, the same arguments are used, because $\sum_{j=1}^{n-|k|} x_{j+|k|,l}^{2} \leq \sum_{j=1}^{n} x_{j,l}^{2}$. Hence:
\[\sup_{|k| \leq 2c_{n}} \left( \frac{1}{2n} \sum_{l=1}^{p} \sum_{l'=1}^{p} \left \|  \left(  \beta_{l'} - \hat{\beta}_{l'} \right)^{2} \sum_{j=1}^{n-|k|} x_{j+|k|,l'}^{2}  \right \|_{\mathbb{L}^{1}} \right) = \mathcal{O} \left( \frac{1}{n} \right).\]

For the third term, for all $l$ fixed, by the Cauchy-Schwarz inequality, we get:
\[\left \| \sum_{j=1}^{n-|k|} \epsilon_{j} x_{j+|k|,l} \left(  \beta_{l} - \hat{\beta}_{l} \right) \right \|_{\mathbb{L}^{1}} \leq \left \| \sum_{j=1}^{n-|k|} \epsilon_{j} x_{j+|k|,l} \right \|_{\mathbb{L}^{2}} \left \| \beta_{l} - \hat{\beta}_{l} \right \|_{\mathbb{L}^{2}}.\]
Then, we have:
\begin{eqnarray*}
\left \| \sum_{j=1}^{n-|k|} \epsilon_{j} x_{j+|k|,l} \right \|_{\mathbb{L}^{2}}^{2} 
&=& \sum_{j=1}^{n-|k|} \sum_{i=1}^{n-|k|} \gamma_{i-j} x_{i+|k|,l} x_{j+|k|,l} \\
&=& \sum_{i=1}^{n-|k|} \sum_{j=i}^{n-|k|} \gamma_{j-i} x_{i+|k|,l} x_{j+|k|,l} + \sum_{i=1}^{n-|k|} \sum_{j=1}^{i-1} \gamma_{j-i} x_{i+|k|,l} x_{j+|k|,l}.
\end{eqnarray*}
For the first term of the right-hand side, it follows with the change of variables $r=j-i$: 
\begin{eqnarray*}
\sum_{i=1}^{n-|k|} \sum_{j=i}^{n-|k|} \gamma_{j-i} x_{i+|k|,l} x_{j+|k|,l} 
&=& \sum_{i=1}^{n-|k|} \sum_{r=0}^{n-|k|-i} \gamma_{r} x_{i+|k|,l} x_{i+|k|+r,l} \\
&\leq & \sum_{i=1}^{n-|k|} \sum_{r=0}^{n-|k|-i} | \gamma_{r} | | x_{i+|k|,l} |  |x_{i+|k|+r,l} | \\
&\leq & \sum_{i=1}^{n-|k|} \sum_{r=0}^{n-|k|-i} | \gamma_{r} | ( x_{i+|k|,l}^{2} + x_{i+|k|+r,l}^{2}) \\
&\leq & \sum_{i=1}^{n-|k|} \sum_{r=0}^{n-|k|-i} | \gamma_{r} | x_{i+|k|,l}^{2} +  \sum_{i=1}^{n-|k|} \sum_{r=0}^{n-|k|-i} | \gamma_{r} | x_{i+|k|+r,l}^{2}. \\
\end{eqnarray*}
Since $r \leq n-|k|-i$, we have $i \leq n-|k|-r$, and it follows that:
\begin{multline*}
\sum_{i=1}^{n-|k|} \sum_{r=0}^{n-|k|-i} | \gamma_{r} | x_{i+|k|,l}^{2} +  \sum_{i=1}^{n-|k|} \sum_{r=0}^{n-|k|-i} | \gamma_{r} | x_{i+|k|+r,l}^{2} \\
\leq \sum_{i=1}^{n-|k|} x_{i+|k|,l}^{2} \sum_{r=0}^{n-|k|-i} | \gamma_{r} |  +  \sum_{r=0}^{n-|k|} | \gamma_{r} | \sum_{i=1}^{n-|k|-r} x_{i+|k|+r,l}^{2} \\
\leq \sum_{i=1}^{n-|k|} x_{i+|k|,l}^{2} \sum_{r}^{} | \gamma_{r} |  +  \sum_{r}^{} | \gamma_{r} | \sum_{i=1}^{n-|k|-r} x_{i+|k|+r,l}^{2}.
\end{multline*}
Since $\sum_{k} | \gamma(k) | < \infty$:
\begin{eqnarray*}
\sum_{i=1}^{n-|k|} x_{i+|k|,l}^{2} \sum_{r}^{} | \gamma_{r} |  +  \sum_{r}^{} | \gamma_{r} | \sum_{i=1}^{n-|k|-r} x_{i+|k|+r,l}^{2}
&\leq & M \left( \sum_{i=1}^{n-|k|} x_{i+|k|,l}^{2} + \sum_{i=1}^{n-|k|-r} x_{i+|k|+r,l}^{2} \right) \\
&\leq & M \left( \sum_{i=1}^{n} x_{i,l}^{2} + \sum_{i=1}^{n} x_{i,l}^{2} \right) \\
&\leq & M' \sum_{i=1}^{n} x_{i,l}^{2}.
\end{eqnarray*}
With the same idea, for the second term of the right-hand side, we have:
\[\sum_{i=1}^{n-|k|} \sum_{j=1}^{i-1} \gamma_{j-i} x_{i+|k|,l} x_{j+|k|,l} \leq M' \sum_{j=1}^{n} x_{j,l}^{2},\]
thus:
\[\sup_{|k| \leq 2c_{n}} \left \| \sum_{j=1}^{n-|k|} \epsilon_{j} x_{j+|k|,l} \right \|_{\mathbb{L}^{2}}^{2} \leq 2 M' \sum_{j=1}^{n} x_{j,l}^{2} = M'' d_{l}(n)^{2}.\]
In conclusion :
\begin{eqnarray*}
\left \| \sum_{j=1}^{n-|k|} \epsilon_{j} x_{j+|k|,l} \left(  \beta_{l} - \hat{\beta}_{l} \right) \right \|_{\mathbb{L}^{1}} 
&\leq & \left \| \sum_{j=1}^{n-|k|} \epsilon_{j} x_{j+|k|,l} \right \|_{\mathbb{L}^{2}} \left \| \beta_{l} - \hat{\beta}_{l} \right \|_{\mathbb{L}^{2}} \\
&\leq & C d_{l}(n) \sqrt{ \mathbb{E} \left( (\beta_{l} - \hat{\beta}_{l})^{2} \right)} \\
&\leq & C \sqrt{ d_{l}(n)^{2} \mathbb{E} \left( (\beta_{l} - \hat{\beta}_{l})^{2} \right)} = \mathcal{O}(1),
\end{eqnarray*}
hence:
\[\sup_{|k| \leq 2c_{n}} \left \| \sum_{j=1}^{n-|k|} \epsilon_{j} x_{j+|k|,l} \left(  \beta_{l} - \hat{\beta}_{l} \right) \right \|_{\mathbb{L}^{1}} = \mathcal{O}(1),\]
therefore:
\[\sup_{|k| \leq 2c_{n}} \left( \frac{1}{n} \sum_{l=1}^{p} \left \| \sum_{j=1}^{n-|k|} \epsilon_{j} x_{j+|k|,l} \left(  \beta_{l} - \hat{\beta}_{l} \right) \right \|_{\mathbb{L}^{1}} \right) = \mathcal{O} \left( \frac{1}{n} \right).\] 

The same idea is used for the fourth term of the right-hand side of~\eqref{252}. Thus~\eqref{251} is verified and consequently~\eqref{201} is true.

\end{proof}

\subsection{Proof of Lemma~\ref{6.2.7}}
 
We start by developing the term $Y_{j}$:
 
\begin{multline*}
\Big \| \hat{\gamma}_{k}^{\ast} - \hat{\gamma}_{k} \Big \|_{\mathbb{L}^{1}}
= \Bigg \| \frac{1}{n} \sum_{j=1}^{n-|k|} \Bigg( Y_{j} - \sum_{l=1}^{p} x_{j,l} \hat{\beta}_{l} \Bigg) \Bigg( Y_{j+|k|} - \sum_{l=1}^{p} x_{j+|k|,l} \hat{\beta}_{l} \Bigg) \\
- \frac{1}{n} \sum_{j=1}^{n-|k|} \Bigg( Y_{j} - \sum_{l=1}^{p} x_{j,l} \beta_{l} \Bigg) \Bigg( Y_{j+|k|} - \sum_{l=1}^{p} x_{j+|k|,l} \beta_{l} \Bigg) \Bigg \|_{\mathbb{L}^{1}} \\
= \Bigg \| \frac{1}{n} \sum_{j=1}^{n-|k|} \Bigg( \sum_{l=1}^{p} x_{j,l} \Big( \beta_{l} - \hat{\beta_{l}} \Big) + \epsilon_{j} \Bigg) \Bigg( \sum_{l=1}^{p} x_{j+|k|,l} \Big( \beta_{l} - \hat{\beta}_{l} \Big) + \epsilon_{j+|k|} \Bigg) \\
- \frac{1}{n} \sum_{j=1}^{n-|k|} \Bigg( Y_{j} - \sum_{l=1}^{p} x_{j,l} \beta_{l} \Bigg) \Bigg( Y_{j+|k|} - \sum_{l=1}^{p} x_{j+|k|,l} \beta_{l} \Bigg) \Bigg \|_{\mathbb{L}^{1}}.
\end{multline*} 
Because $\epsilon_{j}$ is equal to $Y_{j} - \sum_{l=1}^{p} x_{j,l} \beta_{l}$, we have:
\begin{multline*}
\Bigg \| \frac{1}{n} \sum_{j=1}^{n-|k|} \Bigg( \sum_{l=1}^{p} x_{j,l} \Big( \beta_{l} - \hat{\beta_{l}} \Big) + \epsilon_{j} \Bigg) \Bigg( \sum_{l=1}^{p} x_{j+|k|,l} \Big( \beta_{l} - \hat{\beta}_{l} \Big) + \epsilon_{j+|k|} \Bigg) \\
- \frac{1}{n} \sum_{j=1}^{n-|k|} \Bigg( Y_{j} - \sum_{l=1}^{p} x_{j,l} \beta_{l} \Bigg) \Bigg( Y_{j+|k|} - \sum_{l=1}^{p} x_{j+|k|,l} \beta_{l} \Bigg) \Bigg \|_{\mathbb{L}^{1}} \\
= \Bigg \| \frac{1}{n} \sum_{j=1}^{n-|k|} \Bigg( \sum_{l=1}^{p} x_{j,l} \Big( \beta_{l} - \hat{\beta_{l}} \Big) \sum_{l=1}^{p} x_{j+|k|,l} \Big(  \beta_{l} - \hat{\beta}_{l} \Big) \\
+ \epsilon_{j} \sum_{l=1}^{p} x_{j+|k|,l} \Big( \beta_{l} - \hat{\beta}_{l} \Big) + \sum_{l=1}^{p} x_{j,l} \Big( \beta_{l} - \hat{\beta_{l}} \Big) \epsilon_{j+|k|} \Bigg) \Bigg \|_{\mathbb{L}^{1}}.
\end{multline*}
Using the triangle inequality, we obtain:
\begin{multline*}
\Bigg \| \frac{1}{n} \sum_{j=1}^{n-|k|} \Bigg( \sum_{l=1}^{p} x_{j,l} \Big( \beta_{l} - \hat{\beta_{l}} \Big) \sum_{l=1}^{p} x_{j+|k|,l} \Big(  \beta_{l} - \hat{\beta}_{l} \Big) \\
+ \epsilon_{j} \sum_{l=1}^{p} x_{j+|k|,l} \Big( \beta_{l} - \hat{\beta}_{l} \Big) + \sum_{l=1}^{p} x_{j,l} \Big( \beta_{l} - \hat{\beta_{l}} \Big) \epsilon_{j+|k|} \Bigg) \Bigg \|_{\mathbb{L}^{1}} \\
\leq \left \| \frac{1}{n} \sum_{j=1}^{n-|k|} \left( \sum_{l=1}^{p} x_{j,l} \left( \beta_{l} - \hat{\beta_{l}} \right) \sum_{l=1}^{p} x_{j+|k|,l} \left(  \beta_{l} - \hat{\beta}_{l} \right) \right) \right \|_{\mathbb{L}^{1}} + \left \| \frac{1}{n} \sum_{j=1}^{n-|k|} \left( \epsilon_{j} \sum_{l=1}^{p} x_{j+|k|,l} \left(  \beta_{l} - \hat{\beta}_{l} \right) \right) \right \|_{\mathbb{L}^{1}} \\
+ \left \| \frac{1}{n} \sum_{j=1}^{n-|k|} \left( \sum_{l=1}^{p} x_{j,l} \left( \beta_{l} - \hat{\beta_{l}} \right) \epsilon_{j+|k|} \right) \right \|_{\mathbb{L}^{1}},
\end{multline*}
then we swap the sums between them:
\begin{eqnarray*}
&\phantom{=}&
\left \| \frac{1}{n} \sum_{j=1}^{n-|k|} \left( \sum_{l=1}^{p} x_{j,l} \left( \beta_{l} - \hat{\beta_{l}} \right) \sum_{l=1}^{p} x_{j+|k|,l} \left(  \beta_{l} - \hat{\beta}_{l} \right) \right) \right \|_{\mathbb{L}^{1}} \\
&& +\: \left \| \frac{1}{n} \sum_{j=1}^{n-|k|} \left( \epsilon_{j} \sum_{l=1}^{p} x_{j+|k|,l} \left(  \beta_{l} - \hat{\beta}_{l} \right) \right) \right \|_{\mathbb{L}^{1}} + \left \| \frac{1}{n} \sum_{j=1}^{n-|k|} \left( \sum_{l=1}^{p} x_{j,l} \left( \beta_{l} - \hat{\beta_{l}} \right) \epsilon_{j+|k|} \right) \right \|_{\mathbb{L}^{1}} \\
&\leq & \left \| \frac{1}{n} \sum_{j=1}^{n-|k|} \left( \sum_{l=1}^{p} x_{j,l} \left( \beta_{l} - \hat{\beta_{l}} \right) \sum_{l=1}^{p} x_{j+|k|,l} \left(  \beta_{l} - \hat{\beta}_{l} \right) \right) \right \|_{\mathbb{L}^{1}} \\
&& +\: \left \| \frac{1}{n} \sum_{l=1}^{p} \sum_{j=1}^{n-|k|} \epsilon_{j} x_{j+|k|,l} \left(  \beta_{l} - \hat{\beta}_{l} \right) \right \|_{\mathbb{L}^{1}} + \left \| \frac{1}{n} \sum_{l=1}^{p} \sum_{j=1}^{n-|k|} \epsilon_{j+|k|} x_{j,l} \left( \beta_{l} - \hat{\beta_{l}} \right) \right \|_{\mathbb{L}^{1}},
\end{eqnarray*}
and using again the triangle inequality:
\begin{eqnarray*}
&\phantom{=}&
\left \| \frac{1}{n} \sum_{j=1}^{n-|k|} \left( \sum_{l=1}^{p} x_{j,l} \left( \beta_{l} - \hat{\beta_{l}} \right) \sum_{l=1}^{p} x_{j+|k|,l} \left(  \beta_{l} - \hat{\beta}_{l} \right) \right) \right \|_{\mathbb{L}^{1}} \\
&& +\: \left \| \frac{1}{n} \sum_{l=1}^{p} \sum_{j=1}^{n-|k|} \epsilon_{j} x_{j+|k|,l} \left(  \beta_{l} - \hat{\beta}_{l} \right) \right \|_{\mathbb{L}^{1}} + \left \| \frac{1}{n} \sum_{l=1}^{p} \sum_{j=1}^{n-|k|} \epsilon_{j+|k|} x_{j,l} \left( \beta_{l} - \hat{\beta_{l}} \right) \right \|_{\mathbb{L}^{1}} \\
&\leq & \left \| \frac{1}{n} \sum_{j=1}^{n-|k|} \left( \sum_{l=1}^{p} x_{j,l} \left( \beta_{l} - \hat{\beta_{l}} \right) \sum_{l'=1}^{p} x_{j+|k|,l'} \left(  \beta_{l'} - \hat{\beta}_{l'} \right) \right) \right \|_{\mathbb{L}^{1}} \\
&& +\: \frac{1}{n} \sum_{l=1}^{p} \left \| \sum_{j=1}^{n-|k|} \epsilon_{j} x_{j+|k|,l} \left(  \beta_{l} - \hat{\beta}_{l} \right) \right \|_{\mathbb{L}^{1}} + \frac{1}{n} \sum_{l=1}^{p} \left \| \sum_{j=1}^{n-|k|} \epsilon_{j+|k|} x_{j,l} \left( \beta_{l} - \hat{\beta_{l}} \right) \right \|_{\mathbb{L}^{1}}.
\end{eqnarray*}

For the first term of the right-hand side, we have:
\begin{multline*}
\left \| \frac{1}{n} \sum_{j=1}^{n-|k|} \left( \sum_{l=1}^{p} x_{j,l} \left( \beta_{l} - \hat{\beta_{l}} \right) \sum_{l'=1}^{p} x_{j+|k|,l'} \left(  \beta_{l'} - \hat{\beta}_{l'} \right) \right) \right \|_{\mathbb{L}^{1}} \\
= \left \| \frac{1}{n} \sum_{l=1}^{p} \sum_{l'=1}^{p} \sum_{j=1}^{n-|k|} x_{j,l} \left( \beta_{l} - \hat{\beta_{l}} \right) x_{j+|k|,l'} \left(  \beta_{l'} - \hat{\beta}_{l'} \right) \right \|_{\mathbb{L}^{1}},
\end{multline*}
then by triangle inequality:
\begin{multline*}
\left \| \frac{1}{n} \sum_{l=1}^{p} \sum_{l'=1}^{p} \sum_{j=1}^{n-|k|} x_{j,l} \left( \beta_{l} - \hat{\beta_{l}} \right) x_{j+|k|,l'} \left(  \beta_{l'} - \hat{\beta}_{l'} \right) \right \|_{\mathbb{L}^{1}} \\
\leq \frac{1}{n} \sum_{l=1}^{p} \sum_{l'=1}^{p} \left \| \sum_{j=1}^{n-|k|} x_{j,l} \left( \beta_{l} - \hat{\beta_{l}} \right) x_{j+|k|,l'} \left(  \beta_{l'} - \hat{\beta}_{l'} \right) \right \|_{\mathbb{L}^{1}}. 
\end{multline*}
Since $ab \leq \frac{1}{2} a^{2} + \frac{1}{2} b^{2}$, we get:
\begin{multline*}
\frac{1}{n} \sum_{l=1}^{p} \sum_{l'=1}^{p} \left \| \sum_{j=1}^{n-|k|} x_{j,l} \left( \beta_{l} - \hat{\beta_{l}} \right) x_{j+|k|,l'} \left(  \beta_{l'} - \hat{\beta}_{l'} \right) \right \|_{\mathbb{L}^{1}} \\
\leq \frac{1}{n} \sum_{l=1}^{p} \sum_{l'=1}^{p} \left \| \frac{1}{2} \sum_{j=1}^{n-|k|} \left( x_{j,l} \left( \beta_{l} - \hat{\beta_{l}} \right) \right)^{2} + \frac{1}{2} \sum_{j=1}^{n-|k|} \left( x_{j+|k|,l'} \left(  \beta_{l'} - \hat{\beta}_{l'} \right) \right)^{2} \right \|_{\mathbb{L}^{1}},
\end{multline*}
and by the triangle inequality:
\begin{multline*}
\frac{1}{n} \sum_{l=1}^{p} \sum_{l'=1}^{p} \left \| \frac{1}{2} \sum_{j=1}^{n-|k|} \left( x_{j,l} \left( \beta_{l} - \hat{\beta_{l}} \right) \right)^{2} + \frac{1}{2} \sum_{j=1}^{n-|k|} \left( x_{j+|k|,l'} \left(  \beta_{l'} - \hat{\beta}_{l'} \right) \right)^{2} \right \|_{\mathbb{L}^{1}} \\
\leq \frac{1}{2n} \sum_{l=1}^{p} \sum_{l'=1}^{p} \left \| \left( \beta_{l} - \hat{\beta_{l}} \right)^{2} \sum_{j=1}^{n-|k|} x_{j,l}^{2}  \right \|_{\mathbb{L}^{1}} + \frac{1}{2n} \sum_{l=1}^{p} \sum_{l'=1}^{p} \left \|  \left(  \beta_{l'} - \hat{\beta}_{l'} \right)^{2} \sum_{j=1}^{n-|k|} x_{j+|k|,l'}^{2}  \right \|_{\mathbb{L}^{1}}.
\end{multline*}

In conclusion, we have:
\begin{eqnarray*}
\left \| \hat{\gamma}_{k}^{\ast} - \hat{\gamma}_{k} \right \|_{\mathbb{L}^{1}}
&\leq & \frac{1}{2n} \sum_{l=1}^{p} \sum_{l'=1}^{p} \left \| \left( \beta_{l} - \hat{\beta_{l}} \right)^{2} \sum_{j=1}^{n-|k|} x_{j,l}^{2}  \right \|_{\mathbb{L}^{1}} + \frac{1}{2n} \sum_{l=1}^{p} \sum_{l'=1}^{p} \left \|  \left(  \beta_{l'} - \hat{\beta}_{l'} \right)^{2} \sum_{j=1}^{n-|k|} x_{j+|k|,l'}^{2}  \right \|_{\mathbb{L}^{1}} \\
&& +\: \frac{1}{n} \sum_{l=1}^{p} \left \| \sum_{j=1}^{n-|k|} \epsilon_{j} x_{j+|k|,l} \left(  \beta_{l} - \hat{\beta}_{l} \right) \right \|_{\mathbb{L}^{1}} + \frac{1}{n} \sum_{l=1}^{p} \left \| \sum_{j=1}^{n-|k|} \epsilon_{j+|k|} x_{j,l} \left( \beta_{l} - \hat{\beta_{l}} \right) \right \|_{\mathbb{L}^{1}}.
\end{eqnarray*}

\newpage

\bibliographystyle{abbrv}
\bibliography{articlebib}
\addcontentsline{toc}{chapter}{Bibliographie}

\end{document}